\documentclass{proc-l}

\usepackage{amsmath, amssymb, latexsym, amsthm,bm}
\usepackage{enumitem}
\usepackage{mathrsfs}
\usepackage{color}
\usepackage[usenames,dvipsnames]{xcolor}
\usepackage{graphicx}
\usepackage{stmaryrd,amsbsy,enumerate}

\usepackage{float}
\usepackage{caption}

\usepackage{amsrefs}
\usepackage{array}

\usepackage{stmaryrd}

\def\Tinv{\reflectbox{\rotatebox[origin=c]{180}{$T$}}}

\def\Ds{{\,\bigl|\,}}

\def\lk{{\text{\rm lk}}}
\def\Lk{{\text{\rm Lk}}}

\numberwithin{equation}{section}




\DeclareMathAlphabet{\mato}{U}{bbm}{m}{sl}

 \DeclareMathAlphabet{\foo}{OT1}{cmtt}{m}{n}

\definecolor{darkgreen}{rgb}{0.0, 0.2, 0.13}

\definecolor{darkgreen}{rgb}{0.0, 0.5, 0.0}


\def\mea#1{{\llbracket{#1}\rrbracket}}

\def\somea{{\Scale[1.5]{\text{\rm (a)}}}}
\def\someb{{\Scale[1.3]{\text{\rm (b)}}}}

\def\dointa{{\Scale[2.1]{(-1,3)}}}
\def\dointb{{\Scale[2.1]{(1,3)}}}
\def\dointc{{\Scale[2.1]{(-1,0)}}}
\def\dointd{{\Scale[2.1]{(1,0)}}}

\def\gointa{{\Scale[1]{(-1,3)}}}
\def\gointb{{\Scale[1]{(1,3)}}}
\def\gointc{{\Scale[1]{(-1,0)}}}
\def\gointd{{\Scale[1]{(1,0)}}}

\def\hointa{{\Scale[1.2]{(-1,3)}}}
\def\hointb{{\Scale[1.2]{(1,3)}}}
\def\hointc{{\Scale[1.2]{(-1,0)}}}
\def\hointd{{\Scale[1.2]{(1,0)}}}

\def\sointA{{\Scale[1.6]{a}}}
\def\sointB{{\Scale[1.6]{b}}}

\def\sointt{{\Scale[1.5]{T_0}}}





\def\ignore#1{{ }}

\definecolor{mygray}{gray}{0.8}

\def\mm{{\mathscr M}}

\def\cc{{\mathscr C}}

\def\myfrac#1#2{{\genfrac{}{}{0pt}{}{#1}{#2}}}

\def\cir#1#2#3{{\cc{\hbox{\hglue #1 cm}\myfrac{#2}{#3}}}}

\def\centerarc[#1](#2)(#3:#4:#5){ \draw[#1] ($(#2)+({#5*cos(#3)},{#5*sin(#3)})$) arc (#3:#4:#5); } 








\def\conethree{\cir{-0.04}{1}{3}}

\def\cthreefive{\cir{-0.02}{3}{5}}

\def\c13{{\conethree}}
\def\c35{{\cthreefive}}


\newtheorem{theorem}{Theorem}[section] 
\newtheorem{theorem*}{Theorem}[section] 
\newtheorem{proposition}[theorem]{Proposition} 

\newtheorem{lemma}[theorem]{Lemma}
\newtheorem{fact}[theorem]{Fact}
\newtheorem{conjecture}[theorem]{Conjecture}
\newtheorem{observation}[theorem]{Observation}

\theoremstyle{definition}
\newtheorem{definition}{Definition}[section]
\newtheorem{question}[definition]{Question}

\newcommand*{\Scale}[2][4]{\scalebox{#1}{\ensuremath{#2}}}%

\def\dnw{{\Scale[0.9]{(-1,1,0)}}}
\def\dne{{\Scale[0.9]{(1,1,0)}}}
\def\dsw{{\Scale[0.9]{(-1,-1,0)}}}
\def\dse{{\Scale[0.9]{(1,-1,0)}}}

\def\dnw{{\Scale[0.9]{(-1,1,0)}}}
\def\dne{{\Scale[0.9]{(1,1,0)}}}
\def\dsw{{\Scale[0.9]{(-1,-1,0)}}}
\def\dse{{\Scale[0.9]{(1,-1,0)}}}

\def\dnw{{\Scale[1.1]{(-1,1)}}}
\def\dne{{\Scale[1.1]{(1,1)}}}
\def\dsw{{\Scale[1.1]{(-1,-1)}}}
\def\dse{{\Scale[1.1]{(1,-1)}}}

\def\sointt{{\Scale[1.4]{T}_{\Scale[1.1]{0}}}}




\def\Sonetu{{\Scale[1.4]{S}_{\Scale[0.9]{1}}}}
\def\Stwotu{{\Scale[1.4]{S}_{\Scale[0.9]{2}}}}

\def\Lone{{\Scale[1.4]{L}}}

\def\Rone{{\Scale[1.4]{R}}}
\def\aDelta{{\Scale[1.2]{\Delta}}}

\def\ALone{{\Scale[1.4]{A}}}
\def\BLone{{\Scale[1.4]{B}}}

\def\tbigaone{{\Scale[1.1]{1}}}
\def\tbigam{{\Scale[1.1]{m}}}

\def\bigazero{{\Scale[1.4]{0}}}
\def\bigam{{\Scale[1.2]{m}}}

\def\Cm1{{\Scale[1.3]{\text{$m$ crossings, $m$ even}}}}
\def\Cm2{{\Scale[1.2]{\text{$m$ crossings, $m$ odd}}}}

\def\darena{{\Scale[2.8]{a}}}
\def\darenB{{\Scale[2.8]{b}}}
\def\darenb{{\Scale[2.8]{\text{(b)}}}}
\def\earenb{{\Scale[1.6]{\text{(b)}}}}
\def\tarena{{\Scale[1.5]{\text{(a)}}}}

\subjclass[2010]{Primary 57M25}

\begin{document}



\title{\bf The knots that lie above all shadows} 

\author{Carolina Medina}
\address{Department of Mathematics, University of California, Davis, CA 95616, USA}
\thanks{The first author was supported by Fordecyt grant 265667, and is currently supported by a Fulbright Visiting Scholar Grant at UC Davis. The second author was supported by Conacyt grant 222667 and by FRC-UASLP}
\email{cmedina@math.ucdavis.edu}

\author{Gelasio Salazar}
\address{Instituto de F\'\i sica, Universidad Aut\'onoma de San Luis Potos\'{\i}, SLP 78000, Mexico}
\email{gsalazar@ifisica.uaslp.mx}





\maketitle

\def\K{{\#}}

\begin{abstract}
We show that for each even integer $m\ge 2$, every reduced shadow with sufficiently many crossings is a shadow of a torus knot $T_{2,m+1}$, or of a twist knot $T_m$, or of a connected sum of $m$ trefoil knots. 
\end{abstract}

\section{Introduction}\label{sec:intro}

A {\em shadow} is an orthogonal projection of a knot onto a plane. The {\em size} of a shadow is its number of crossings. As usual, all shadows under consideration are {\em regular}, that is, they have no triple points and no points of self-tangency. A shadow $S$ {\em resolves into} a knot $K$ if there is an over/under assignment at the crossings of $S$ that gives a diagram of $K$. 

This work revolves around the following fundamental problem. 

\begin{question}\label{que:que1}
Given a shadow  $S$, which knots $K$ satisfy that $S$ resolves into $K$? 
\end{question}

To investigate this question we must restrict our attention to {\em reduced} shadows, that is, shadows with no nugatory crossings. As in~\cite{taniyama}, we say that a crossing $x$ in a shadow $S$ is {\em nugatory} if $S{\setminus}\{x\}$ is disconnected.  This restriction is crucial: it is easy to exhibit arbitrarily large non-reduced shadows that only resolve into the unknot.

In Figure~\ref{fig:one}(a) we illustrate the shadow of a minimal crossing diagram of a torus knot $T_{2,m+1}$, {and in (b) we show the shadow of a minimal crossing diagram of a twist knot $T_m$.}  As proved in~\cite{fertility}, these shadows only resolve into torus knots $T_{2,n}$ and into twist knots, respectively. 

\begin{figure}[ht!]
\centering
\vglue 0.3 cm
\scalebox{0.5}{\input{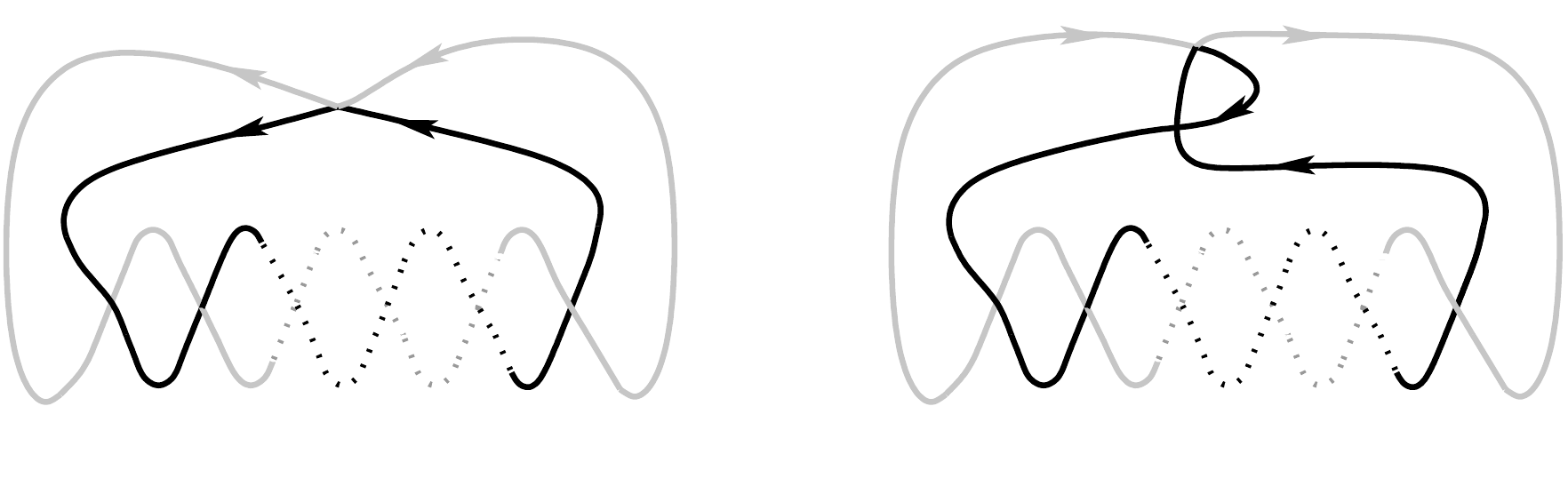_t}}
\caption{The shadow of a minimal crossing diagram of a torus knot $T_{2,m+1}$ (left) and of a twist knot $T_m$ (right).}
\label{fig:one}
\end{figure}

{Thus} there are arbitrarily large reduced shadows that only resolve into torus knots $T_{2,n}$ {(including the unknot $T_{2,1}$)}, and there are arbitrarily large reduced shadows that only resolve into twist knots {(including the unknot $T_0$).}

As we {illustrate} in Figure~\ref{fig:two}, {there are} arbitrarily large reduced shadows that only resolve into connected sums of (left-handed or right-handed) trefoil knots, and into the unknot. 

\begin{figure}[ht!]
\centering
\vglue 0.3 cm
\scalebox{0.5}{\input{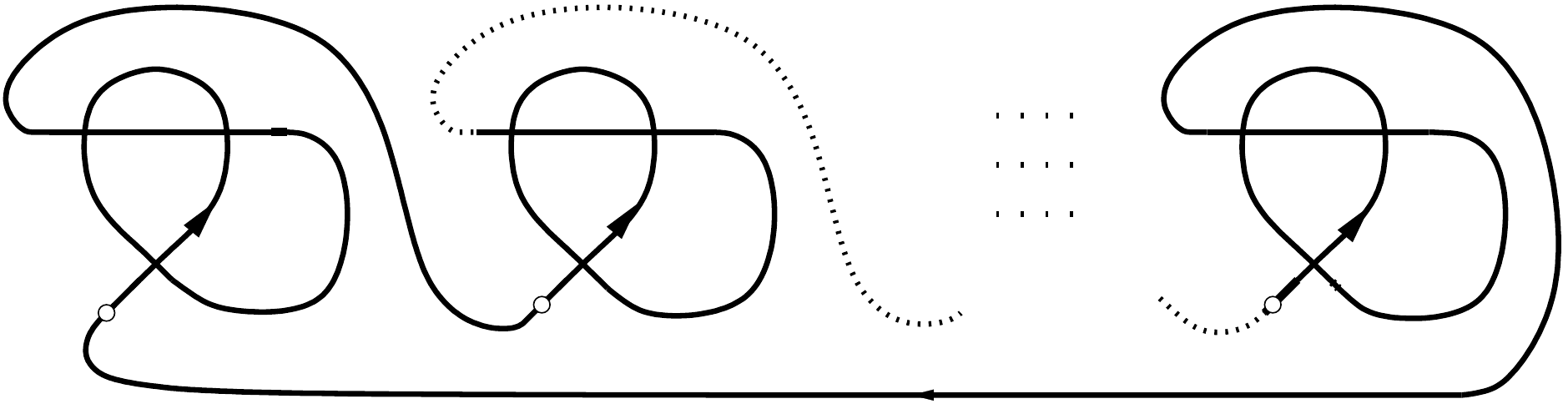_t}}
\caption{This shadow only resolves into connected sums of trefoil knots. The labelled points are indicated for future reference.}
\label{fig:two}
\end{figure}

Torus knots $T_{2,n}$ and twist knots are the simplest prime knots and connected sums of trefoil knots are the simplest composite knots. Our main result is that these three knot types lie at the core of Question~\ref{que:que1}, in the sense that every reduced shadow resolves into a knot with ``large'' crossing number in one of these families.  

\begin{theorem}\label{thm:main}
For each even integer $m\ge 2$, there is an integer $n$ with the following property. Every reduced shadow with at least $n$ crossings  resolves either into a torus knot $T_{2,m+1}$, or into a twist knot $T_m$, or into a connected sum of $m$ trefoil knots. 
\end{theorem}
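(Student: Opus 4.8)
The plan is to attack the shadow combinatorially through its Gauss code. Traversing the underlying curve once records each of the $N$ crossings twice, producing a double occurrence word of length $2N$, equivalently a chord diagram $D$ on $2N$ cyclically ordered points. Two crossings are \emph{interlaced} when their chords cross in $D$; this yields the interlacement graph $G$ on the $N$ crossings, while the non-interlaced (nested or disjoint) pairs form a laminar family carrying a nesting poset $P$. The value of this encoding is that the three target knots are the resolutions of three canonical chord patterns: a set of pairwise interlaced crossings is exactly the standard $(2,k)$ pattern behind $T_{2,k}$; a maximal nested chain of crossings together with one crossing clasping it is the twist region behind $T_m$; and a family of mutually disjoint pockets is the connected-sum pattern behind a connected sum of $m$ trefoils. (The hypothesis that $m$ is even is precisely what makes $m+1$ odd, so that $T_{2,m+1}$ is a genuine knot rather than a two-component link.) I would fix $n$ only at the end, large enough to drive the extractions below.

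First I would run a two-stage pigeonhole on $D$. Applying Ramsey's theorem to $G$, a sufficiently large $N$ forces either a clique of size $m+1$ or an independent set $I$ of some prescribed large size $M$. A clique of size $m+1$ is a set of $m+1$ pairwise interlaced crossings, i.e.\ exactly the chord pattern of $T_{2,m+1}$, which is the torus case. Otherwise $I$ is a laminar subfamily, and I would apply a Dilworth--Mirsky (Erd\H{o}s--Szekeres) argument to its nesting poset $P$: for $M$ large this produces either a nested chain of length $m$ or an antichain of size $m$. A nested chain of length $m$ is a twist region of $m$ crossings; since $S$ is reduced, its outermost crossing is non-nugatory and hence interlaces some further crossing, which supplies the missing clasp and gives the twist-knot configuration. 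An antichain of size $m$ consists of $m$ mutually disjoint pockets, which (once the pockets are genuinely separated) organise the curve as a connected sum of $m$ nontrivial reduced factors.

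The remaining work is to upgrade each forced configuration into an honest resolution of the \emph{whole} shadow. In each case I would assign over/under on the ``active'' crossings so as to realise the target on them---the alternating choice on the $(2,m+1)$ pattern for the torus knot, the clasp-plus-twist choice for $T_m$, and a trefoil choice inside each pocket for the connected sum---and then choose the over/under on all remaining crossings so that, after Reidemeister I and II moves, they contribute nothing. For the connected-sum case the picture is conceptually clearest: the disjointness of the pockets exhibits $S$ as a genuine connected sum, each factor is resolved into a trefoil and the connecting arcs are made crossing-free, so the diagram resolves into a connected sum of $m$ trefoils. The role of \cite{fertility} here is to confirm that these canonical pieces do resolve into the claimed knots, so that the target can simply be read off from the extracted pattern.

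I expect the genuine difficulty to lie in this last step rather than in the counting. The abstract interlacement pattern only controls the active crossings; the inactive crossings interact with them, and one must produce a single global over/under assignment that simultaneously builds the target and cancels everything else. Making the inactive tangles trivialisable---so that they collapse under Reidemeister moves without disturbing the active pattern---is where reducedness must be used essentially, both to forbid the degenerate cancelling configurations and to guarantee the clasping and the local, pocket-confined knotting needed in the twist and connected-sum cases. A secondary obstacle is exhaustiveness of the trichotomy: I would need a structural bound showing that bounded clique number, bounded nesting depth, and boundedly many connected-sum factors together force a bounded number of crossings, so that a large reduced shadow cannot slip past all three cases.
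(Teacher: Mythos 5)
Your combinatorial skeleton is essentially the paper's: the three Gauss-code relations between a pair of crossings (interlaced $ijij$, nested $ijji$, disjoint $iijj$) are exactly the three colours in the paper's Ramsey argument for Lemma~\ref{lem:structure}, and the interlaced clique does yield the $T_{2,m+1}$ configuration. But two of your three branches have genuine gaps. For the nested case, the twist-knot configuration ($m$-reverse in the paper) requires a \emph{single} crossing interlacing \emph{all} $m$ nested crossings, not just a clasp on the outermost one: reducedness gives you a crossing $b$ interlacing the outermost chord, but the inner occurrence of $b$ may sit between consecutive members of the nest without interlacing any of the inner chords, and then no twist region is present. The paper's Proposition~\ref{pro:work3} resolves this by starting from a nest of length $3m^2$ (not $m$) and showing that if no suitable clasping symbol exists, then the gaps between consecutive nest members must each contain an alternating pair, which makes the shadow $m$-decomposable and hence a connected sum of $m$ trefoils --- an outcome your trichotomy does not anticipate. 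Similarly, in the disjoint case an antichain of $m$ pockets is worthless unless each pocket actually contains a loop crossed twice; when the pockets are too empty, the witnesses escape them, and Proposition~\ref{pro:work4} has to re-run Erd\H{o}s--Szekeres on the escaped symbols and fall back into the interlaced or nested cases. So the clean ``clique $\to$ torus, chain $\to$ twist, antichain $\to$ sum of trefoils'' correspondence is false as stated; a fourth structural outcome and iterated extraction are needed.

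The second gap is the one you yourself flag: upgrading the active pattern to a resolution of the whole shadow. For the connected-sum cases this is genuinely easy (Taniyama's trefoil theorem~\cite{taniyama} applied to each subshadow, with every inter-factor crossing made an overpass), but for the torus and twist cases it is the technical heart of the result, and \cite{fertility} does not supply it --- that paper analyses specific minimal-crossing shadows, not arbitrary shadows containing a pattern. What is actually needed is the paper's Proposition~\ref{pro:key}: the portion of the shadow spanned by the $m$ active crossings, \emph{together with all the inactive crossings entangled with it}, is a tangle shadow of rank $\pm m$, and one must show it lifts to a tangle of the correct type with linking index $m/2$; this is done by adapting Taniyama's link algorithm~\cite{taniyama2} and then cancelling the surplus crossings of the braid diagram by Reidemeister~II moves. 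Without an argument of this kind, ``choose the over/under on all remaining crossings so that they contribute nothing'' is precisely the unproved step, since the remaining crossings include crossings between the two active strands that can cancel the very twisting you are trying to create.
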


{We remark that throughout this work we do not distinguish between a knot and its mirror image. It is valid to take this license because clearly a shadow resolves into a knot if and only if it resolves into its mirror image.}


\ignore{
\begin{figure}[ht!]
\centering
\vglue 0.3 cm
\scalebox{0.5}{\input{nt12.pdf_t}}
\caption{The shadow of a minimal crossing diagram of a torus knot $T_{2,m+1}$ (left) and of a twist knot $T_m$ (right).}
\label{fig:one}
\end{figure}
}

\ignore{For each odd integer $n\ge 3$, $T_{2,n}$ is $n_1$ in Rolfsen knot table~\cite{atlas,rolfsen}. For each even integer $n\ge 4$, $T_{n-2}$ is the knot $n_1$, and for each odd $n\ge 5$, $T_{n-2}$ is $n_2$.}


\subsection{Related work}


Besides proving the result mentioned above on the shadows in Figure~\ref{fig:one}, Cantarella, Henrich, Magness, O'Keefe, Perez, Rawdon, and Zimmer investigate in~\cite{fertility} several problems related to Question~\ref{que:que1}, including an exhaustive analysis on shadows of minimal crossing diagrams of knots with crossing number at most $10$. 

In~\cite{hanaki1}, Hanaki investigates the following related question: given a shadow $S$, what can be said about invariants of a knot that projects to $S$? Hanaki's work illustrates very well the difficulty of Question~\ref{que:que1}, with a running example of a shadow with $9$ crossings for which it is not easy to determine whether or not it resolves into the torus knot $T_{2,7}$.



In a seminal paper, Taniyama~\cite{taniyama} proved that every nontrivial reduced shadow resolves into a trefoil knot, and characterized which shadows resolve into a figure-eight knot, or into a torus knot $T_{2,5}$, or into a twist knot $T_3$ ($5_2$ in Rolfsen's table). 

In~\cite{taniyama2}, Taniyama proved the following result closely related to Theorem~\ref{thm:main}. For each even integer $m\ge 2$, there is an integer $n$ such that the following holds. If $S$ is a $2$-component link shadow, in which the projections of the components cross each other at least $n$ times, then $S$ resolves into a torus link $T_{2,m}$. The techniques and ideas in Taniyama's proof are the workhorse of the proof of one of our key lemmas.


\section{Proof of Theorem~\ref{thm:main}}\label{sec:proofmain}

{We start with an informal account of the main ideas in the proof of Theorem~\ref{thm:main}. The proof has three main ingredients. Let $m\ge 2$ be an integer. First we identify four kinds of shadows. A shadow that shares certain features with the shadows in Figure~\ref{fig:one}(a) (respectively, (b)) will be called $m$-{\em consistent} (respectively, $m$-{\em reverse}). A shadow that shares certain features with the shadow in Figure~\ref{fig:two} will be called $m$-{\em nontrivial}.}

{
We identify a fourth kind of shadow, inspired by the type of shadow illustrated in Figure~\ref{fig:fir}. A shadow will be called $m$-{\em decomposable} if it can be ``decomposed'' into $m$ shadows, each of which resolves into a trefoil knot. 
}

\begin{figure}[ht!]
\centering
\scalebox{0.5}{\input{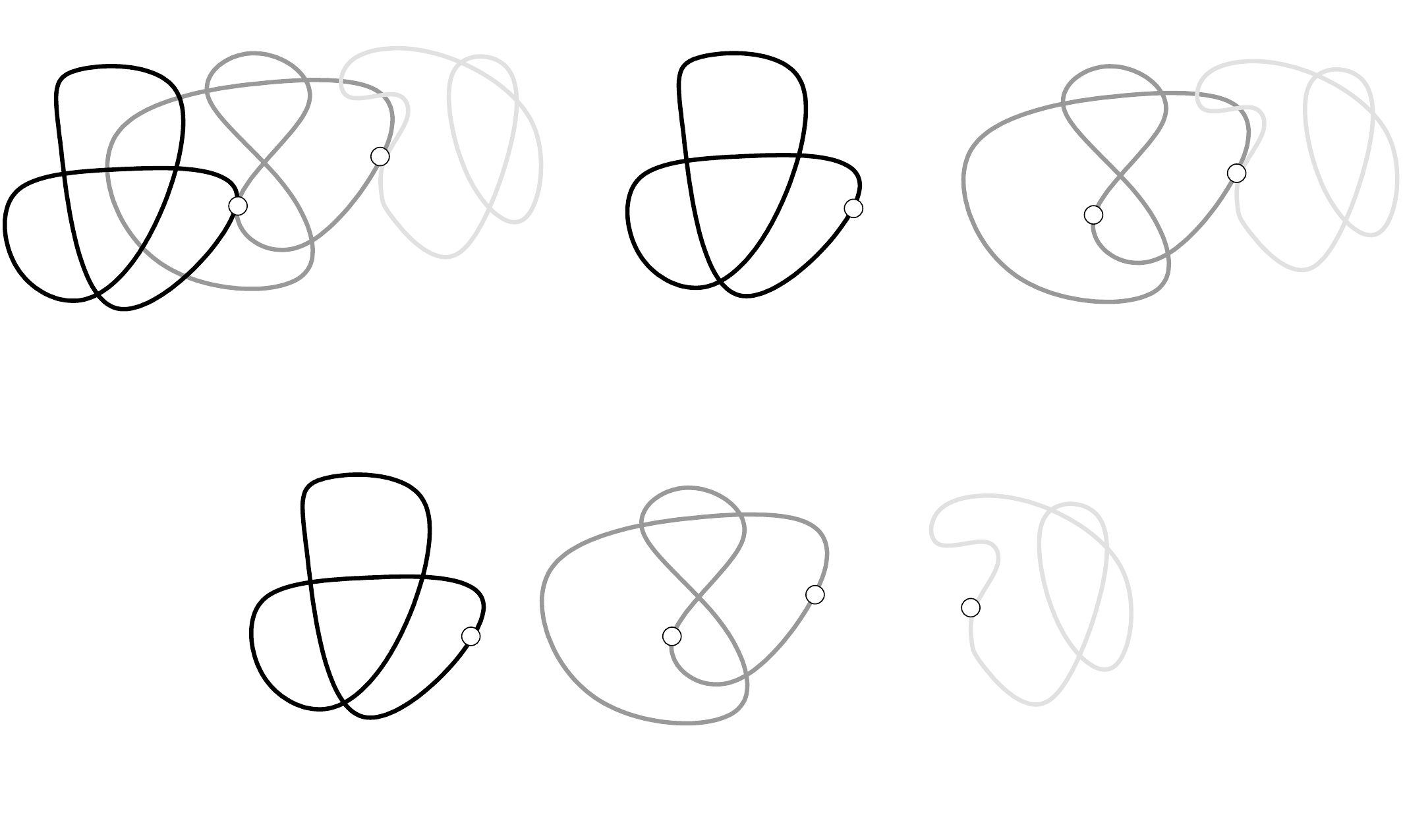_t}}
\caption{A $3$-decomposable shadow.} 
\label{fig:fir}
\end{figure}

{The second main ingredient in the proof is that each of these four kinds of shadows resolves into a knot of one of the types in the statement of Theorem~\ref{thm:main}. More precisely, an $m$-consistent shadow resolves into a torus knot $T_{2,m+1}$, an $m$-reverse shadow resolves into a twist knot $T_m$, and an $m$-nontrivial or $m$-decomposable shadow resolves into a connected sum of $m$ trefoil knots.
}

{The third and final ingredient in the proof is the Structure Lemma: for each fixed even integer $m\ge 2$, every sufficiently large shadow is either $m$-consistent, or $m$-reverse, or $m$-nontrivial, or $m$-decomposable. In view of the previous paragraph, this completes the proof of Theorem~\ref{thm:main}.
}

\ignore{consists of showing that every sufficiently large reduced shadow $S$ either (i) ``resembles'' one of the shadows in Figures~\ref{fig:one} and~\ref{fig:two}, and so it resolves into a torus knot $T_{2,m+1}$, or into a twist knot $T_m$, or into a connected sum of trefoil knots; or (ii) can be ``decomposed'' into $m$ shadows, each of which resolves into a trefoil knot, and so $S$ resolves into a connected sum of trefoil knots.}

\subsection{{Subarcs and subshadows.}}
{Before we proceed to formally identify the four types of shadows mentioned in the previous subsection, we discuss the notions of a subarc and of a subshadow of a shadow.}

{We refer the reader to Figure~\ref{fig:Notions1}. Let $S$ be a shadow with a pre-assigned traversal direction. An {\em open subarc} of $S$ is obtained by taking two distinct points $x,y\in S$ that are not crossing points, and traversing $S$ from $x$ to $y$, following this direction.}

{Suppose now that $x$ is a crossing point. If we traverse $S$ starting at $x$ following its direction, until we come back to $x$, we obtain a {\em closed subarc} of $S$. If the closed subarc is a simple closed curve, then it is a {\em loop}, and $x$ is its {\em root}.}

{Note that for each crossing $x$ of $S$ there are two closed subarcs $S_1,S_2$ that start and end at $x$. Note also that $S_1$ and $S_2$ are shadows in their own right. We say that $S_1$ and $S_2$ are the {\em subshadows} of $S$ {\em based at $x$}, and write $S=S_1{\oplus_{x}} S_2$.}

\begin{figure}[ht!]
\centering
\scalebox{0.8}{\input{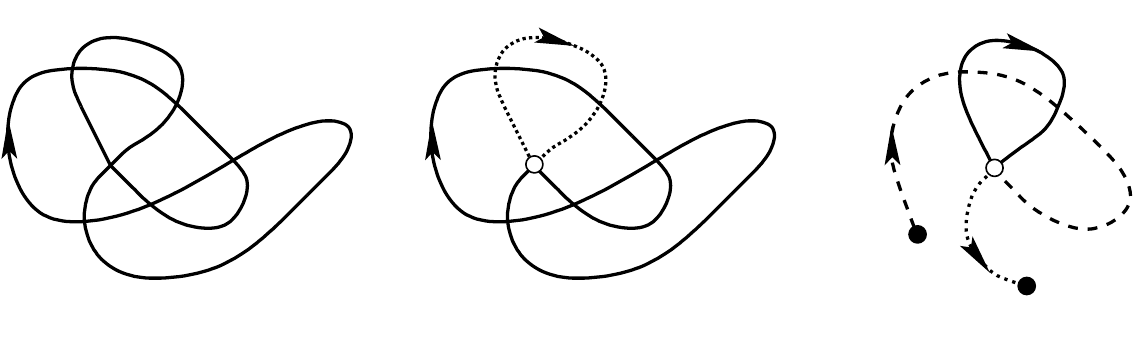_t}}
\caption{In (a) we have a shadow $S$. In (b) we show two closed subarcs $L$ (dotted) and $M$ (solid) of $S$, where $L$ is a loop with root $x$. {Here $L$ and $M$ are the subshadows of $S$ based at $x$, and so $S=L{\oplus_{x}}M$.} In (c) we show an open nontrivial subarc of $S$.} 
\label{fig:Notions1}
\end{figure}

We remark that we assume that every shadow under consideration comes with a preassigned traversal direction. As illustrated in Figure~\ref{fig:Notions1}, a (open or closed) subarc of a shadow naturally inherits the traversal orientation of the shadow. 

\subsection{{Two kinds of shadows inspired by Figure~\ref{fig:one}}}
{We start by identifying the feature that we capture from the shadow in Figure~\ref{fig:one}(a).}

\begin{definition}
A shadow $S$ is $m$-{\em consistent} if it has a crossing $0$ such that the subshadows $S_1,S_2$ based at $0$ cross each other at points $1,2,\ldots,m$, and as we traverse each of $S_1$ and $S_2$ starting at $0$, we encounter these crossings precisely in this order.
\end{definition}

\begin{lemma}\label{lem:third}
For each even integer $m\ge 2$, every $m$-consistent shadow resolves into a torus knot $T_{2,m+1}$.
\end{lemma}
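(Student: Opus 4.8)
The plan is to choose the over/under assignment so that the resulting diagram is (after Reidemeister moves) the standard minimal-crossing diagram of $T_{2,m+1}$. The starting observation is that the crossing $0$ together with the $m$ coherent crossings $1,\dots,m$ reproduces exactly the combinatorial pattern of the shadow in Figure~\ref{fig:one}(a): two arcs $S_1,S_2$ meeting at $0$ and crossing one another at $1,2,\dots,m$, met in the same order along each arc. Because the two arcs run \emph{parallel} through the resulting twist region (this is precisely what ``consistent'' buys us, in contrast with the antiparallel situation that produces a twist knot in the $m$-reverse case), assigning the alternating over/under pattern at $0,1,\dots,m$ turns these $m+1$ crossings into the closure of the $2$-braid $\sigma_1^{\,m+1}$. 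Since $m$ is even, $m+1$ is odd and this closure is a single-component knot, namely $T_{2,m+1}$. In fact the parity hypothesis is forced: $S$ is a knot shadow, so its resolutions are knots, and the core can close up into a knot rather than a two-component torus link only when $m+1$ is odd; this is exactly why the lemma is stated for even $m$.

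The substance of the proof is to dispose of \emph{every other} crossing of $S$ --- the self-crossings of $S_1$, the self-crossings of $S_2$, and any additional $S_1$--$S_2$ crossings beyond $1,\dots,m$ --- so that they do not affect the knot type. I would argue by induction on the number of crossings of $S$. In the base case $S$ has exactly the $m+1$ crossings $0,1,\dots,m$, so it is literally the shadow of Figure~\ref{fig:one}(a), which by construction is the shadow of a diagram of $T_{2,m+1}$ and therefore resolves into it. For the inductive step, suppose $S$ has a crossing $c\notin\{0,1,\dots,m\}$. I would locate an \emph{innermost} reducible configuration among the extra crossings: if some extra crossing is the root of an empty loop, one Reidemeister~I move removes it for either choice of over/under there; otherwise an innermost empty bigon formed by extra crossings can be deleted by a Reidemeister~II move after assigning those two crossings opposite over/under. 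Either way we obtain a strictly smaller shadow $S'=S_1'\oplus_0 S_2'$ in which $0$ and the coherent crossings $1,\dots,m$ --- and the order in which they are met along each arc --- are untouched, so $S'$ is again $m$-consistent. By the inductive hypothesis $S'$ resolves into $T_{2,m+1}$; keeping that assignment on $S'$ and using the above assignment at the deleted crossing(s) gives a diagram of $S$ that is related to the resolution of $S'$ by the single Reidemeister move, hence also a diagram of $T_{2,m+1}$.

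The main obstacle is exactly the existence and harmlessness of such an innermost reducible configuration. An extra crossing need not bound an empty loop, nor need an innermost empty bigon consist solely of extra crossings, because the coherent strands through $1,\dots,m$ and the arcs emanating from the root $0$ may thread through every candidate region. Controlling this interleaving --- showing that one can always isolate a reducible configuration disjoint from the designated crossings, or else reroute the reduction so that $m$-consistency is preserved --- is precisely the bookkeeping that Taniyama carries out for two-component link shadows in~\cite{taniyama2}. I would adapt that technique here, treating $S_1$ and $S_2$ as the two ``components'' and $1,\dots,m$ as the prescribed torus pattern, so that his argument supplies, at each stage, an extra crossing whose removal does not disturb the core. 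A secondary point requiring care is the behaviour at the root $0$, where $S_1$ and $S_2$ are joined into one knot: one must check that the innermost reductions never involve $0$ and that the final alternating assignment at $0,1,\dots,m$ is coherent with the twist region, so that the closure is genuinely $T_{2,m+1}$ and not a lower torus knot or a connected sum.
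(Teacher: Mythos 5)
Your proposal diverges from the paper's argument at exactly the point you flag as ``the main obstacle,'' and that obstacle is a genuine gap, not a bookkeeping detail. Your induction needs, at every step, either an empty loop rooted at an extra crossing or an innermost empty bigon formed by extra crossings, removable by a Reidemeister I or II move. Such configurations need not exist: already the standard trefoil shadow has no empty loop and no empty bigon (every loop is crossed by the rest of the curve), and an $m$-consistent shadow can contain such a configuration among its extra crossings, threaded through by the core strands. In that situation no crossing-decreasing RI or RII move is available at all, and a monotone induction on the number of crossings stalls; one would need RIII moves or crossing-increasing moves, at which point the induction has no measure to decrease. Your appeal to Taniyama does not repair this, because his technique in~\cite{taniyama2} is not a diagram-simplification argument that isolates removable crossings: it is an over/under assignment algorithm that produces, for a $2$-component link shadow, a resolution with prescribed linking number. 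It never supplies ``an extra crossing whose removal does not disturb the core.''

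The paper's proof keeps every crossing and works in $3$-space instead. It first moves the crossing $0$ to the unbounded face, so that everything else lies in a disk $\Delta$ and forms a tangle shadow of rank $m$. The key step (Proposition~\ref{pro:key}) resolves this tangle shadow into a $z$-monotone tangle $T$ of Type I with $|\lk(T)|=m/2$; this is where Taniyama's linking-number argument is actually used, applied to an auxiliary $2$-component link shadow obtained by closing off the tangle shadow. Because $T$ is $z$-monotone, its vertical projection is a $2$-strand braid diagram, and the linking index $m/2$ forces that braid to reduce, by Reidemeister II moves performed on the braid diagram rather than on the planar shadow, to $\sigma_1^{\pm m}$; closing up with the strands through $0$ gives $T_{2,m+1}$. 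So the cancellation of the extra crossings is achieved homologically (via the linking index of a monotone resolution), not by locating reducible configurations in the plane. If you want to salvage your outline, the piece you must supply is precisely a proof of Proposition~\ref{pro:key} or an equivalent statement; the alternating assignment at $0,1,\dots,m$ and the parity remark in your first paragraph are fine but are not where the difficulty lives.
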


We defer the proof of this lemma to Section~\ref{sec:thirdfourth}. We remark that in this definition it is not required that $S_1$ and $S_2$ cross each other {\em only} at these $m$ points. They may cross each other arbitrarily many times, but as long as there exist $m$ crossing points with the required property, then $S$ is $m$-consistent. A similar remark applies to the following definition, motivated by the shadow of the twist knot in Figure~\ref{fig:one}(b). 
\begin{definition}
A shadow $S$ is $m$-{\em reverse} if it has a crossing $0$ such that the subshadows $S_1,S_2$ based at $0$ cross each other at points $1,2,\ldots,m$, and as we traverse $S_1$ starting at $0$ we encounter these crossings in this order, but as we traverse $S_2$ starting at $0$ we encounter them in the reverse order $m,\ldots,2,1$.
\end{definition}

\begin{lemma}\label{lem:fourth}
For each even integer $m\ge 2$, every $m$-reverse shadow resolves into a twist knot $T_m$.
\end{lemma}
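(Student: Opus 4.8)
The plan is to exhibit an explicit over/under assignment on $S$ and then reduce the resulting diagram, by Reidemeister moves, to the standard diagram of the twist knot $T_m$. Write $S = S_1 \oplus_0 S_2$ and call the crossings $0,1,\dots,m$ the \emph{core}; every other crossing of $S$ is either a self-crossing of $S_1$, a self-crossing of $S_2$, or a mutual crossing of $S_1$ and $S_2$ distinct from the core. The assignment is built in two independent parts: a \emph{trivializing} choice on all non-core crossings, designed so that they can be undone, and a fixed alternating choice on the core, designed so that the anti-parallel crossings $1,\dots,m$ become a genuine twist region. The proof runs closely parallel to that of Lemma~\ref{lem:third}; the essential difference is that the reverse order of the crossings along $S_2$ turns the parallel two-strand region of the torus case into an anti-parallel one, which is exactly what converts a torus knot into a twist knot.

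For the trivializing part I would follow the technique of Taniyama~\cite{taniyama2} for two-component link shadows, viewing $S_1$ and $S_2$ (split apart at $0$) as the two components. First resolve the self-crossings of each $S_i$ descendingly with respect to its traversal from $0$, so that each $S_i$ becomes an unknotted arc; then, at every non-core mutual crossing, let $S_1$ pass over $S_2$. With these choices the non-core crossings can be removed one at a time: reading along the strands and peeling off innermost loops and bigons by Reidemeister~I and~II moves, one arrives at a diagram in which $S_1$ and $S_2$ meet only along the core. I expect this simultaneous simplification to be the main obstacle, since the non-core crossings are interleaved with the core crossings along both $S_1$ and $S_2$, and one must check that the reductions can be ordered so that they never destroy or entangle the core; this is precisely the combinatorial bookkeeping that Taniyama's argument is designed to control.

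It remains to identify the reduced diagram. The crossings $1,\dots,m$ are traversed in the order $1,\dots,m$ along $S_1$ and in the reverse order $m,\dots,1$ along $S_2$, so the two strands run anti-parallel through them; resolving $1,\dots,m$ so that the diagram is alternating there therefore produces $m$ consecutive half-twists, that is, a twist region of length $m$. Crossing $0$, together with the way $S_1$ and $S_2$ close up on either side of this region, supplies the clasp. Note here that the core $0,1,\dots,m$ on its own is not planar --- for $m=2$ its Gauss sequence is $0\,1\,2\,0\,2\,1$, in which the letters $1$ and $2$ each interlace only the letter $0$, violating Gauss's parity condition --- so the simplification in the previous step cannot remove every non-core crossing. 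The crossing that is thereby forced to survive is exactly the second strand of the clasp, and the reduced diagram is the standard $C(2,m)$ rational diagram of the twist knot; recognizing it as $T_m$ completes the proof.

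I remark that the evenness of $m$ is not used in the argument above: it enters the paper only where $T_{2,m+1}$ must be a knot rather than a link, which is irrelevant to the twist-knot case treated here.
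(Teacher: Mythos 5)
Your proposal founders on the step you yourself flag as the main obstacle, and the difficulty is not mere bookkeeping. You prescribe a resolution (each $S_i$ descending, $S_1$ over $S_2$ at every non-core mutual crossing) and then assert that Reidemeister I and II moves remove all non-core crossings, ``arriving at a diagram in which $S_1$ and $S_2$ meet only along the core.'' As your own parity computation shows, no such diagram exists: the word $0\,1\,2\cdots m\,0\,m\cdots 2\,1$ is not a realizable Gauss code, so a simplification terminating in exactly the core is impossible. The patch --- that exactly one non-core crossing is ``forced to survive'' and that it lands precisely in the second clasp position --- is asserted rather than argued: nothing in the proposal identifies which crossing survives, shows that only one does, or shows that the surviving configuration is a clasp rather than something that unknots or re-knots the twist region. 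Since the definition of $m$-reverse allows arbitrarily many extra crossings interleaved with $1,\dots,m$ along both $S_1$ and $S_2$, the cancellations you want are not local bigon removals; they would in general require Reidemeister III moves and a non-interference argument with the core, and none of this is supplied. This missing step is the entire content of the lemma.

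The paper avoids explicit cancellation altogether. It moves the crossing $0$ to the unbounded face, so that everything else sits inside a disk $\Delta$ as a tangle shadow $U$ of rank $-m$, and then proves (Proposition~\ref{pro:key}) that $U$ lifts to a $z$-monotone tangle $T$ of Type II with $|\lk(T)|=m/2$. Taniyama's two-component link techniques are invoked only to control the \emph{linking number} of a suitable resolution --- an invariant --- not to perform diagram moves; once $T$ is a two-strand braid of linking index $m/2$, Reidemeister II moves on the braid diagram alone reduce it to the standard $m$-crossing twist region, and the explicit closure strands outside the cylinder supply the clasp. You should also retract the closing remark: the evenness of $m$ is genuinely used here, since it is what makes the two strands of $T$ return to their own sides of the cylinder (so that $T$ is of Type I or II at all) and what makes $m/2$ an achievable linking index; it is not merely a knot-versus-link issue for the torus case.
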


We also defer the proof of this lemma to Section~\ref{sec:thirdfourth}. 

\subsection{{A kind of shadow inspired by Figure~\ref{fig:two}}}

{The shadow in Figure~\ref{fig:two} is the concatenation of $m$ open subarcs: the open subarcs that start at $p_i$ and end at $p_{i+1}$, for $i=1,\ldots,m-1$, and the one that starts at $p_m$ and ends at $p_1$.} 

{Each of these $m$ open subarcs has the following property, illustrated in Figure~\ref{fig:Notions1}(c): it can be written as a concatenation $\alpha L \beta$, where $L$ is a loop and $\alpha\cup\beta$ crosses $L$ at least twice. We say that an open arc with this property is {\em nontrivial}.}


\begin{definition}
A shadow is $m$-{\em nontrivial} if it is the concatenation of $m$ nontrivial open subarcs. 
\end{definition}

\begin{lemma}\label{lem:first}
For each integer $m\ge 1$, every $m$-nontrivial shadow resolves into a connected sum of $m$ trefoil knots.
\end{lemma}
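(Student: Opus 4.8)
The plan is to exhibit an explicit over/under assignment on the crossings of an $m$-nontrivial shadow $S$ and to verify that it yields a connected sum of $m$ trefoils. Write $S=A_1A_2\cdots A_m$ as the cyclic concatenation of nontrivial open subarcs, with $A_i=\alpha_iL_i\beta_i$, where $L_i$ is a loop with root $r_i$ and $\alpha_i\cup\beta_i$ crosses $L_i$. In each $A_i$ I single out three \emph{gadget crossings}: the root $r_i$, and the first two crossings $a_i,b_i$ of $\alpha_i\cup\beta_i$ with $L_i$ (as in Figure~\ref{fig:Notions1}(c)). Since $L_i,\alpha_i,\beta_i$ all lie in $A_i$, these three crossings are self-crossings of $A_i$, and together they realize the three-crossing pattern of a standard trefoil diagram (a loop pierced twice by a strand). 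Every crossing of $S$ then has exactly one of three types: a gadget crossing; a non-gadget crossing with both strands in a single $A_i$; or a crossing whose two strands lie in distinct arcs $A_i,A_j$.

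First I would carry out the \textbf{combination step}. Fix the total order $A_1<\cdots<A_m$ and, at every crossing between distinct arcs, let the lower-indexed arc pass over. This is precisely the condition that no two of the arcs are clasped, so by the standard argument for separating non-clasped tangles the arcs can be isotoped into pairwise disjoint balls, each meeting the remainder of the knot only at its two concatenation points. Because the $A_i$ are consecutive along $S$, the resulting knot is the connected sum $\widehat{A_1}\#\cdots\#\widehat{A_m}$, where $\widehat{A_i}$ is $A_i$ closed up by a crossing-free arc, carrying the induced resolution of its internal crossings. It therefore suffices to resolve each $\widehat{A_i}$ into a trefoil; that is, to settle the case $m=1$.

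Second, the \textbf{single-arc step}. For one nontrivial arc I resolve the gadget crossings $r_i,a_i,b_i$ alternatingly (the assignment turning the loop-through-root pattern into a trefoil rather than an unknot), and resolve every remaining self-crossing of $A_i$ \emph{descendingly}, i.e.\ as an overpass on its first encounter along the traversal. I would then show that the descending crossings can be cancelled by Reidemeister I and II moves, leaving exactly the standard three-crossing trefoil diagram determined by the gadget; this is in the same spirit as Taniyama's theorem~\cite{taniyama} that every nontrivial reduced shadow resolves into a trefoil.

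The \textbf{main obstacle} is precisely this last reduction: showing that making every non-gadget crossing descending really does let one cancel all of them while leaving the three gadget crossings untouched and in trefoil position. The difficulty is that the extra crossings of $\alpha_i\cup\beta_i$ with $L_i$, together with the self-crossings of $\alpha_i$ and $\beta_i$, are in general interleaved along $L_i$ and cannot be confined to a ball disjoint from the gadget, so one cannot simply invoke ``a descending diagram is trivial.'' The way around this is a relative descending argument: realize the traversal as a height function that is monotone except across the three gadget crossings, so that the descending crossings contribute only trivial bumps that are isotoped away, while $r_i,a_i,b_i$ realize the trefoil tangle. Choosing $L_i$ innermost and $a_i,b_i$ to be an innermost piercing pair makes the cut-off sub-disk clean and renders the straightening explicit; I would record this local computation in a figure. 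The combination step, by contrast, is routine once the total-order rule is fixed.
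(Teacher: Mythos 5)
Your proposal is correct and takes essentially the same route as the paper: the paper's proof also reduces to showing that each nontrivial open subarc resolves into a $1$-tangle whose closure is a trefoil (by the technique of Theorem~1 of~\cite{taniyama}) and then combines the $m$ tangles into a connected sum, except that the paper leaves both steps as one-line assertions while you spell out the over-strand ordering for the combination step and the relative descending argument for the single-arc step. The only detail to keep straight is the one you already flag yourself: the two piercing crossings must be chosen as a consecutive (innermost) pair along the piercing strand so that the three-crossing gadget really is a trefoil pattern.
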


\begin{proof}
In~\cite[Theorem 1]{taniyama} it is proved that every reduced shadow that is not a simple closed curve resolves into a trefoil knot. Using the same techniques, it is easily shown that if $A$ is a nontrivial open subarc of a shadow $S$, then $A$ resolves into a $1$-tangle whose closure is a trefoil knot. From this it follows that if $S$ is an $m$-nontrivial shadow, then $S$ resolves into a connected sum of $m$ trefoil knots.
\end{proof}

\subsection{{A kind of shadow inspired by Figure~\ref{fig:fir}}}
{To formally identify the fourth kind of shadow that plays a major role in the proof of Theorem~\ref{thm:main}, we start with an observation. We refer the reader back to Figure~\ref{fig:fir} for an illustration. Let $1$ be a crossing of a shadow $S$, and let $S_1,S_2'$ be the subshadows of $S$ based at $1$. That is, $S=S_1{\oplus_{1}} S_2'$. Suppose now that $S_2'$ (seen as a shadow on its own) has a crossing $2$, and let $S_2,S_3$ be the subshadows of $S_2'$ based at $2$, so that $S_2'=S_2{\oplus_{2}} S_3$.}

{Thus $S=S_1{\oplus_{1}} (S_2{\oplus_{2}}S_3)$. If we now go back to $S$, and consider the crossing $2$, we find that $S=S_1'{\oplus_{2}} S_3$, where $S_1'$ is precisely $S_1{\oplus_{1}}S_2$. Thus we can unambiguously write $S=S_1{\oplus_{1}}S_2{\oplus_{2}}S_3$, as $S_1{\oplus_{1}}(S_2{\oplus_{2}}S_3) = (S_1{\oplus_{1}}S_2){\oplus_{2}}S_3$.
}

{An iterative application of this observation yields that if $1,\ldots,m-1$ are crossings of a shadow $S$, then there exist shadows $S_1,\ldots,S_m$ such that $S=S_1{\oplus_{1}} \cdots {\oplus_{m-1}} S_m$. We say that $S$ {\em decomposes} into the shadows $S_1,\ldots,S_m$.
} 


\begin{definition}
{Let $m\ge 2$ be an integer. A shadow is $m$-{\em decomposable} if it decomposes into $m$ shadows, each of which resolves into a trefoil knot.}
\end{definition}

\ignore{
\begin{definition}
 For each integer $m\ge 2$, a shadow $S$ is $m$-{\em decomposable} if it has a crossing point $x$ such that one of the subshadows of $S$ based at $x$ resolves into a trefoil knot, and the other subshadow is $(m-1)$-decomposable. 
\end{definition}
}
\begin{lemma}\label{lem:second}
{For each integer $m\ge 2$}, every $m$-decomposable shadow resolves into a connected sum of $m$ trefoil knots.
\end{lemma}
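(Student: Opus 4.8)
The plan is to proceed by a direct induction on $m$, using the decomposition $S=S_1\oplus_1\cdots\oplus_{m-1}S_m$ together with the fact, established in the preceding discussion, that the operation $\oplus_x$ is associative in the precise sense that $S_1\oplus_1(S_2\oplus_2\cdots\oplus_{m-1}S_m)=(S_1\oplus_1S_2)\oplus_2\cdots\oplus_{m-1}S_m$. The key geometric principle I would isolate first is the following: if a shadow $S$ decomposes as $S=A\oplus_x B$ at a crossing $x$, and if an over/under assignment is chosen on $A$ making it resolve into a knot (or $1$-tangle) $K_A$, and independently on $B$ making it resolve into $K_B$, then these assignments are compatible (they agree at $x$, since $x$ is the unique crossing shared by the two subshadows, and in fact each of $A,B$ contains $x$ as its basepoint where the two strands meet). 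The resulting global assignment resolves $S$ into the connected sum $K_A\,\K\,K_B$. This is precisely the diagrammatic meaning of the based splitting $\oplus_x$: near $x$ the two subshadow strands cross once, and resolving that single crossing either way realizes the connect-sum operation at $x$.

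First I would treat the base case $m=2$. Here $S=S_1\oplus_1 S_2$, and by hypothesis each of $S_1,S_2$ resolves into a trefoil. Applying the connect-sum principle above at the crossing $1$, we resolve $S$ into a connected sum of two trefoil knots. Next, for the inductive step, I would write $S=S_1\oplus_1\cdots\oplus_{m-1}S_m$ and group it as $S=S_1\oplus_1 S'$, where $S'=S_2\oplus_2\cdots\oplus_{m-1}S_m$ is the based splitting of $S$ at crossing $1$ (legitimate by associativity). The shadow $S'$ decomposes into the $m-1$ shadows $S_2,\ldots,S_m$, each resolving into a trefoil, so $S'$ is $(m-1)$-decomposable; by the induction hypothesis $S'$ resolves into a connected sum of $m-1$ trefoils. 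Since $S_1$ resolves into a single trefoil, the connect-sum principle applied at crossing $1$ resolves $S=S_1\oplus_1 S'$ into a connected sum of $m$ trefoils, completing the induction.

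The one point requiring genuine care — and the main obstacle — is verifying that the local over/under choices made on the individual subshadows are mutually consistent and that the based splitting $\oplus_x$ really does correspond to a connected sum at the diagram level. Concretely, one must check that a crossing of $S$ lying in, say, $S_1$ is a crossing between two strands both belonging to $S_1$ (and symmetrically for $S'$), so that assigning over/under data separately on $S_1$ and on $S'$ never conflicts; the only crossing the two subshadows share is the basepoint $x$, where the splitting is defined precisely so that the two strands meet transversally and resolving it performs the connect sum. Once this local compatibility is nailed down, the global assignment is well defined and the knot type it produces is exactly the connected sum of the knot types produced on the pieces. I would state this compatibility as a short preliminary observation and then let the induction run as above; the remainder is bookkeeping.
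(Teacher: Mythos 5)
Your overall architecture (an induction driven by a one-step connect-sum observation at the basepoint crossing) is the same as the paper's, but the step you flag as ``the one point requiring genuine care'' is exactly where your argument breaks, because the compatibility claim you rely on is false. When $S=S_1\oplus_{x}S_2$, the subshadows $S_1$ and $S_2$ are the two closed subarcs of the curve based at $x$; as plane curves they may cross each other at arbitrarily many crossings of $S$ besides $x$. (This is not a pathology: it is the whole point of the definitions of $m$-consistent and $m$-reverse shadows, where $S_1$ and $S_2$ are required to cross each other at $m$ additional points, and it is visible in Figure~\ref{fig:fir}.) So it is not true that ``a crossing of $S$ lying in $S_1$ is a crossing between two strands both belonging to $S_1$,'' and the crossings of $S$ split into three classes: self-crossings of $S_1$, self-crossings of $S_2$, and mutual crossings. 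Independent over/under assignments on $S_1$ and $S_2$ say nothing about the mutual crossings, and an arbitrary choice there can link the two resolved pieces nontrivially, so the resulting knot need not be a connected sum of the two pieces.

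The missing idea is the content of the paper's Observation~\ref{obs:seci}: at every mutual crossing one prescribes that the strand coming from the resolution $T$ of $S_1$ passes \emph{over} the strand coming from the resolution $K$ of $S_2$. With $T$ lying entirely above $K$, the basepoint crossing becomes nugatory in the resulting diagram, and a twist of $T$ about that crossing exhibits the knot as a connected sum of $K$ with a trefoil. Once you replace your ``compatibility'' observation with this statement (and its proof), your induction on $m$ goes through exactly as you wrote it and coincides with the paper's argument.
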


As we will see below, Lemma~\ref{lem:second} follows easily from the next remark. 

\begin{observation}\label{obs:seci}
{Suppose that $S=S_1\oplus_{1} S_2$, and that $S_1$ resolves into a trefoil knot, and that $S_2$ resolves into a knot $K$.} Then $S$ resolves into a connected sum of $K$ with a trefoil knot.
\end{observation}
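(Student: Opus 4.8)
The plan is to prove Observation~\ref{obs:seci} first, since Lemma~\ref{lem:second} reduces to it by a clean induction. The key geometric fact is that the decomposition $S = S_1 \oplus_1 S_2$ is a based sum at a single crossing: the two subshadows $S_1$ and $S_2$ meet only at the crossing point $1$, and each is a closed curve passing through that point. I would first set up a local picture at the crossing $1$. Because $1$ is the unique point shared by $S_1$ and $S_2$, a small neighborhood of $1$ looks like two transverse strands, and the four arc-ends emanating from $1$ belong, in cyclic order, alternately to $S_1$ and $S_2$ (one incoming and one outgoing strand from each). The idea is that by choosing any over/under resolutions $D_1$ of $S_1$ (giving a trefoil) and $D_2$ of $S_2$ (giving $K$), and then resolving the crossing $1$ itself in either way, I obtain a diagram $D$ of $S$.

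First I would make precise that a resolution of $S$ is obtained by independently resolving the crossings in $S_1 \setminus \{1\}$, the crossings in $S_2 \setminus \{1\}$, and the crossing $1$; this is exactly the content of the based-sum decomposition, since these crossing sets partition the crossings of $S$. Next I would argue that the resulting diagram $D$ is the \emph{connected sum} of $D_1$ and $D_2$ as knot diagrams. The cleanest way to see this is to observe that the crossing $1$ acts as the connect-sum point: traversing $S$ from $1$, one first runs entirely through the arc of $S_1$ and returns to $1$, then runs entirely through $S_2$ and returns to $1$. Hence the diagram $D$ visits all of $D_1$, comes back to the basepoint, then visits all of $D_2$; there is a simple closed curve (a small circle around $1$) meeting $D$ in exactly two points that separates the $S_1$-part from the $S_2$-part. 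By the standard characterization of connected sums of knots via a $2$-sphere (here a circle in the plane, lifted) meeting the knot in two points, $D$ is a diagram of the connected sum $K(D_1) \mathbin{\#} K(D_2)$. Since $D_1$ resolves $S_1$ into a trefoil and $D_2$ resolves $S_2$ into $K$, we conclude $S$ resolves into $K \mathbin{\#} (\text{trefoil})$, which is Observation~\ref{obs:seci}.

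With the Observation in hand, Lemma~\ref{lem:second} follows by induction on $m$. An $m$-decomposable shadow decomposes as $S = S_1 \oplus_1 \cdots \oplus_{m-1} S_m$ with each $S_i$ resolving into a trefoil. Using the associativity noted in the excerpt, $S = S_1 \oplus_1 S'$ where $S' = S_2 \oplus_2 \cdots \oplus_{m-1} S_m$ is itself $(m-1)$-decomposable and based at the crossing $1$. By the inductive hypothesis $S'$ resolves into a connected sum $K$ of $m-1$ trefoils, and $S_1$ resolves into a trefoil, so Observation~\ref{obs:seci} gives that $S$ resolves into $K \mathbin{\#} (\text{trefoil})$, a connected sum of $m$ trefoils. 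The base case $m=2$ is Observation~\ref{obs:seci} directly (with $K$ a single trefoil).

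I expect the main obstacle to be the careful justification that the diagram $D$ really is a \emph{connected sum} rather than some other composite, i.e. verifying that the based-sum structure at crossing $1$ produces the two-point-separating-circle configuration that defines connected sum. The subtlety is that $S_1$ and $S_2$ are genuinely crossing at $1$ (not merely tangent or touching), so one must check that \emph{some} choice of over/under at $1$ yields the separated, connect-sum picture — equivalently, that one can pull the two lobes apart after resolving. This is where the remark that the two subshadows share only the point $1$ is essential: it guarantees that away from a neighborhood of $1$ the two diagrams occupy disjoint regions (up to planar isotopy), so the separating circle exists. Everything else is bookkeeping, and the associativity of $\oplus$ established in the preceding paragraphs makes the induction formal and routine.
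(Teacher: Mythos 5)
There is a genuine gap, and it sits exactly where you located the ``main obstacle'' before waving it away. Your argument rests on the claim that the two subshadows $S_1$ and $S_2$ based at the crossing $1$ meet \emph{only} at the point $1$, so that away from a neighbourhood of $1$ they occupy disjoint regions and a two-point separating circle exists. That claim is not part of the definition of $S=S_1\oplus_1 S_2$ and is false in general: the subshadows based at a crossing are simply the two closed subarcs obtained by traversing $S$ from that crossing back to itself, and they may cross each other arbitrarily many times elsewhere. Indeed the paper relies on exactly this phenomenon --- in the definitions of $m$-consistent and $m$-reverse shadows the two subshadows based at $0$ cross each other at $m$ further points, and in Observation~\ref{obs:dosmil} the operation $\mea{\cdot}$ exists precisely to delete the symbols of crossings \emph{between} the two subshadows. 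Consequently your partition of the crossings of $S$ into ``crossings of $S_1$, crossings of $S_2$, and the crossing $1$'' omits a fourth class, the $S_1$--$S_2$ crossings, and your separating-circle argument collapses for any shadow in which that class is nonempty.

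The paper's proof is built around handling that fourth class: one resolves $S_1$ into a trefoil $T$ and $S_2$ into $K$, and then \emph{chooses} the over/under assignment at every crossing between the two subshadows so that the strand of $T$ always passes over. With that choice $T$ can be lifted entirely above $K$, the crossing $1$ becomes nugatory in the resulting diagram, and a twist about $1$ exhibits the knot as $K$ connect-summed with a trefoil. Your induction deriving Lemma~\ref{lem:second} from the Observation is fine and matches the paper, but the Observation itself needs the over-crossing prescription (or something equivalent) to be correct; as written, your proof only covers the special case $S_1\cap S_2=\{1\}$.
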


\begin{proof}As shown in Figure~\ref{fig:nef}(b), we obtain a resolution of $S$ by combinining the resolution of $S_1$ into a trefoil knot $T$ and the resolution of $S_2$ into a knot $K$, prescribing that each crossing between $T$ and $K$ is an overpass for the strand in $T$. 

In this resolution $K'$ of $S$, the crossing $1$ is nugatory. As illustrated in (c), twisting $T$ around $1$ shows that $K'$ is a connected sum of $K$ with a trefoil knot.
\end{proof}

\begin{figure}[ht!]
\centering
\scalebox{0.5}{\input{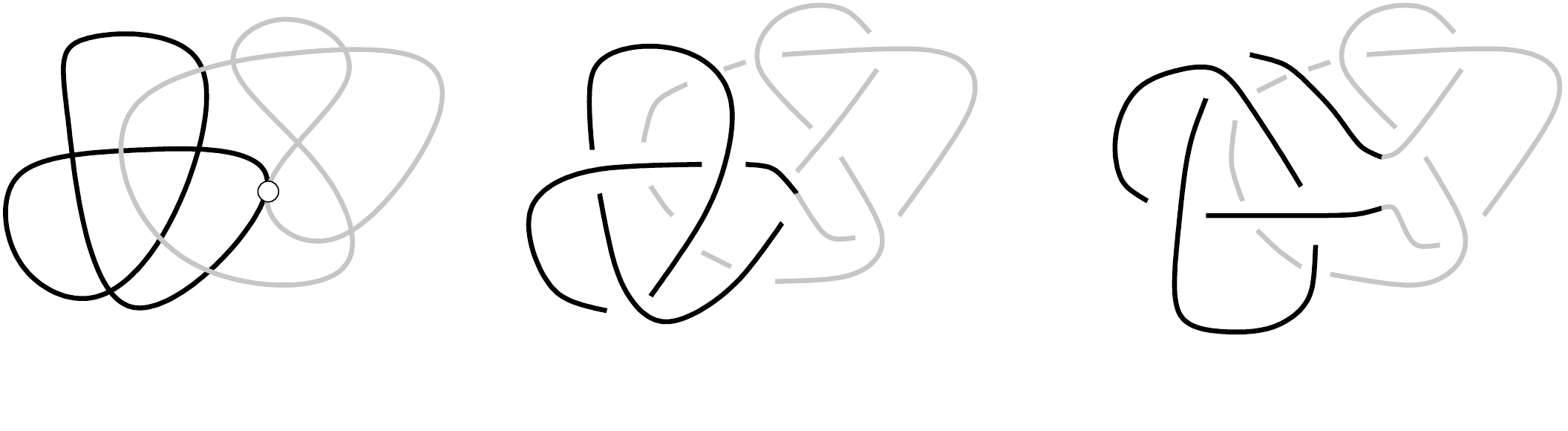_t}}
\caption{Illustration of the proof of Observation~\ref{obs:seci}.} 
\label{fig:nef}
\end{figure}

\begin{proof}[Proof of Lemma~\ref{lem:second}]
The lemma follows immediately from the definition of an $m$-decomposable shadow and an inductive application of Observation~\ref{obs:seci}.
\end{proof}

\ignore{
\begin{figure}[ht!]
\centering
\scalebox{0.68}{\input{bt4.pdf_t}}
\caption{A shadow of a torus knot $T_{2,m+1}$ (left) and a shadow of a twist knot $T_m$ (right).}
\label{fig:overc}
\end{figure}
}

\subsection{The Structure Lemma and proof of Theorem~\ref{thm:main}}

The final ingredient for the proof of Theorem~\ref{thm:main} is the following result, whose proof is given in Section~\ref{sec:structure}.

\begin{lemma}\label{lem:structure}
For each even integer $m\ge 2$, there is an integer $n$ with the following property. Every reduced shadow with at least $n$ crossings is either $m$-nontrivial, or $m$-decomposable, or $m$-consistent, or $m$-reverse.
\end{lemma}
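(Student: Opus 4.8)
The plan is to measure how entangled each individual crossing is and to split into two regimes handled by entirely different tools. For a crossing $x$ write $S=S_1{\oplus_x}S_2$, and let $\lambda(x)$ denote the number of crossings between $S_1$ and $S_2$; equivalently $\lambda(x)$ is the number of crossings $y$ whose two visits interleave the two visits to $x$ along the traversal of $S$, i.e.\ the degree of $x$ in the interlacement (chord) diagram of $S$. The first thing I would record is the dictionary item that $x$ is nugatory exactly when $S_1$ and $S_2$ meet only at $x$, that is, when $\lambda(x)=0$; hence $S$ reduced means $\lambda(x)\ge 1$ for every $x$. Fix $m$ and put $d=(m-1)^2$. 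The whole proof then turns on whether or not some crossing has $\lambda(x)>d$.

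In the \emph{high-linking} regime, suppose some crossing $x$ has $\lambda(x)\ge d+1$. Traversing $S_1$ from $x$ meets the $\lambda(x)$ crossings between $S_1$ and $S_2$ in some order $a_1,a_2,\dots$, while traversing $S_2$ from $x$ meets the same crossings in a permuted order. By the Erd\H{o}s--Szekeres theorem, a sequence of more than $(m-1)^2$ terms contains a monotone subsequence of length $m$. An increasing subsequence is a set of $m$ crossings met in the same relative order along $S_1$ and along $S_2$, which is precisely the defining configuration of an $m$-consistent shadow; a decreasing subsequence is a set of $m$ crossings met in opposite orders, which is precisely the defining configuration of an $m$-reverse shadow. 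Matching these against the two definitions is routine, so this regime is settled.

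In the \emph{bounded-linking} regime, assume $\lambda(x)\le d$ for every crossing $x$. Here the building block is a loop crossed at least twice: both a nontrivial open subarc and a subshadow that is such a loop resolve into a trefoil, by the argument used for Lemma~\ref{lem:first} and by~\cite{taniyama}. I would first recover a loop by the standard shortest-arc argument: a crossing $x$ for which the shorter of its two arcs has minimum length determines an arc with no repeated label, hence a loop $L$; reducedness forces $L$ to be crossed at least once, and $\lambda(x)\le d$ forces it to be crossed at most $d$ times, so every loop produced this way is short. The main work is then to extract, when $S$ has at least $n=n(m)$ crossings, a large family of pairwise disjoint trefoil blocks: I would iterate the extraction, each time locating a loop crossed at least twice (which exists by applying~\cite{taniyama} to the current piece), recording it, and deleting a neighbourhood of bounded size. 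Since each block and the strands meeting it involve only $O(d)$ crossings, a shadow with enough crossings yields as many disjoint blocks as desired.

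Finally I would organize $2m$ such disjoint blocks into one of the two prescribed shapes. Reading the blocks in cyclic order around $S$, each junction between consecutive blocks is either a crossing shared by the two blocks or a crossing-free subarc; by pigeonhole at least $m$ junctions are of one kind. If $m$ junctions are crossing-free, cutting $S$ at one non-crossing point in each realizes $S$ as a concatenation of $m$ nontrivial open subarcs, so $S$ is $m$-nontrivial; if $m$ junctions are shared crossings, the corresponding $\oplus$-splittings realize $S=S_1{\oplus}\cdots{\oplus}S_m$ with each $S_i$ containing a trefoil block, so $S$ is $m$-decomposable (and Observation~\ref{obs:seci} then re-derives Lemma~\ref{lem:second}). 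The genuinely hard part is this bounded-linking regime, and specifically two points: keeping the remainder reduced and nontrivial after each excision, so that~\cite{taniyama} can be reapplied to guarantee the next block; and the organization step, where one must verify that the blocks really do sit in a cyclic order with well-defined junctions, that crossing-free junctions yield \emph{nontrivial} open subarcs, and that shared-crossing junctions assemble into a legitimate iterated $\oplus$-decomposition. By contrast, once the threshold $d=(m-1)^2$ is in place the high-linking regime is clean.
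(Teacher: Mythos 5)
Your high-linking half is correct, and it is a genuinely different (and cleaner) route to the $m$-consistent/$m$-reverse cases than the paper takes: fixing a single crossing $x$ with $\lambda(x)>(m-1)^2$ and applying Erd\H{o}s--Szekeres to the permutation relating the two traversal orders of the crossings between $S_1$ and $S_2$ does immediately yield the defining configuration of an $m$-consistent or $m$-reverse shadow, whereas the paper gets there indirectly by $3$-colouring pair-patterns ($ijij$, $ijji$, $iijj$) in the Gauss code and invoking Ramsey's theorem. If your bounded-linking half could be completed, the whole argument would even give polynomial rather than Ramsey-type bounds on $n$.

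The bounded-linking half, however, has a genuine gap, and it sits exactly where you flag ``the genuinely hard part'': that part is not a verification but the entire content of the lemma. Three specific obstructions. First, the iterated excision is not well defined: deleting a neighbourhood of a loop leaves something that is no longer a single closed curve, and there is no argument that the remainder stays reduced, so Taniyama's theorem cannot simply be reapplied to produce the next block. Second, the ``cyclic order with junctions'' picture is false in general: a block consists of a loop \emph{together with the strands crossing it}, and while the loop occupies a contiguous interval of the traversal, the second visits to its crossing points can lie anywhere in the Gauss code. Consequently a crossing-free junction scheme does not make each of the $m$ open subarcs nontrivial --- for that, \emph{both} occurrences of at least two crossings of the loop must lie inside the same subarc, which is precisely condition (2) in the paper's definition of a good substring and is what Proposition~\ref{pro:work4} works to arrange. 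Third, shared-crossing junctions $a_1,\dots,a_{m-1}$ read in cyclic order do not automatically produce an iterated $\oplus$-decomposition: that requires the nested pattern $a_1\cdots a_{m-1}\cdots a_{m-1}\cdots a_1$ (the paper's $m$-nice structure in Proposition~\ref{pro:work3}), together with an alternating pair inside each resulting piece $\mea{\alpha_i\beta_i}$, neither of which follows from the blocks being pairwise disjoint. Until these three points are supplied --- and they amount to reproving the substance of Propositions~\ref{pro:work4} and~\ref{pro:work3} --- the bounded-linking regime is an assertion, not a proof.
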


\begin{proof}[Proof of Theorem~\ref{thm:main}]
It follows from Lemmas~\ref{lem:third}, \ref{lem:fourth},~\ref{lem:first}, ~\ref{lem:second}, and~\ref{lem:structure}.
\end{proof}



\section{Proofs of Lemmas~\ref{lem:third} and~\ref{lem:fourth}}\label{sec:thirdfourth}

\ignore{
Before proceeding to the proofs of the lemmas, we give an informal overview of the main ideas behind the proof of Lemma~\ref{lem:third}; the proof of Lemma~\ref{lem:fourth} follows a very similar strategy. 
%
%
We refer the reader to Figure~\ref{fig:101}(a) below. If $S$ is an $m$-consistent shadow, then using standard arguments we may assume that the layout of $S$ is as illustrated in that figure. That is, (i) with the exception of a crossing $0$, all crossings of $S$ are inside a disk $\Delta$ in the $xy$-plane; and (ii) the part $U$ of $S$ inside $\Delta$ is a {\em tangle shadow} (formal definitions will be given shortly) with the property that the arcs $L$ and $R$ cross each other at points $1,\ldots,m$, and as we traverse the arc $L$ from bottom to top, and also as we traverse the arc $R$ from bottom to top, we encounter these crossings in precisely this order. 
The workhorse behind the proof of Lemma~\ref{lem:third} is the following: such a tangle shadow $U$ is a projection of a tangle isotopic to the tangle shown in Figure~\ref{fig:wh1}(a). For the final step in the proof of the lemma, we observe that the part of $S$ outside $\Delta$ is a projection of the piecewise linear strings outside the cylinder in Figure~\ref{fig:101}(b). Combining these arguments, it follows that $S$ resolves into a knot $K$ that is isotopic to the knot illustrated in Figure~\ref{fig:wh2}, which is clearly a torus knot $T_{2,m+1}$.
}


In the proofs of Lemmas~\ref{lem:third} and~\ref{lem:fourth} we make essential use of tangles. We adopt the notion that a {\em tangle} is the disjoint union of two {\em strings} (homeomorphic images of $[0,1]$) in the cylinder $\Delta\times[0,3]$, where $\Delta$ is the disk in the $xy$-plane of radius $\sqrt{2}$ centered at the origin. Admittedly, this choice of host cylinder for tangles may seem unnatural, but it will be very convenient for illustration purposes. 

\begin{figure}[H]
\centering
\scalebox{0.32}{\input{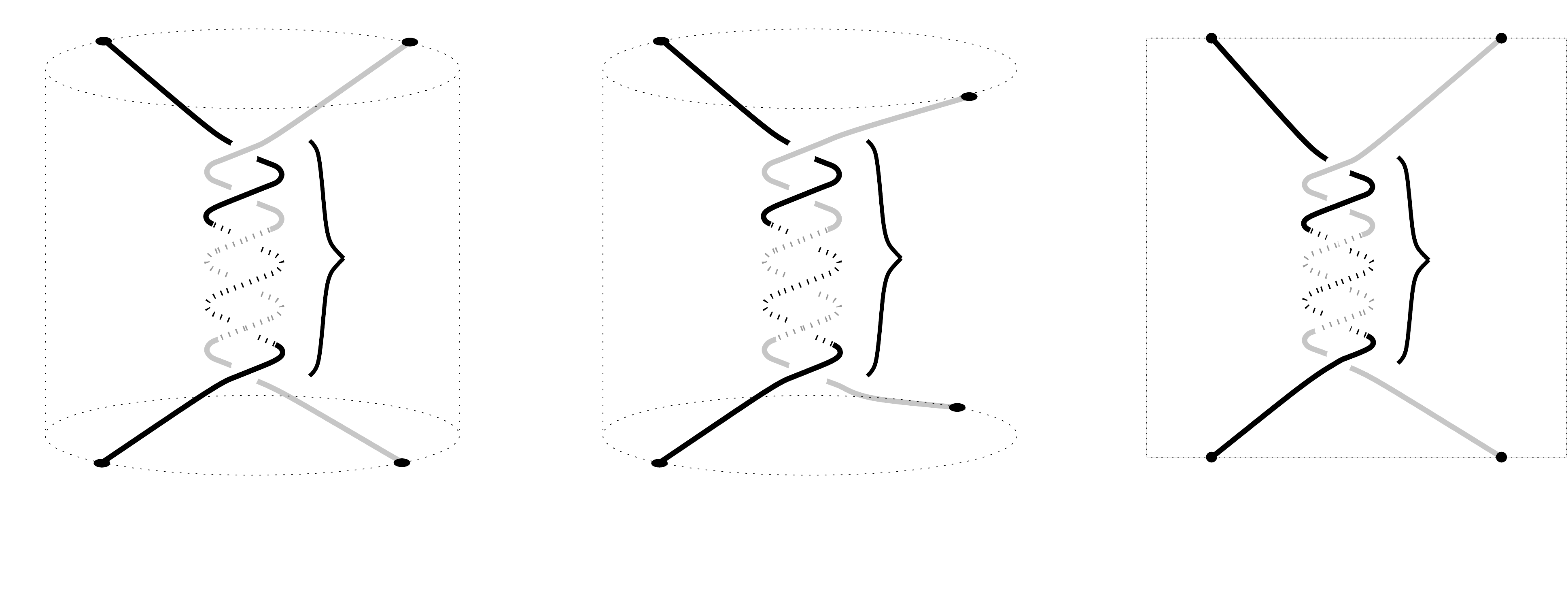_t}}
\caption{In (a) we illustrate a tangle of Type I, and in (b) a tangle of Type II. Both tangles have the braid diagram in (c).}
\label{fig:wh1}
\end{figure}

All tangles we consider are $z$-{\em monotone}, that is, for each $c\in[0,3]$ the plane $z=c$ intersect each string in exactly one point. Moreover, we only consider two particular types of tangles, illustrated in Figure~\ref{fig:wh1}. A tangle is {\em of Type I} if it consists of a string $\lambda$ with endpoints $(-1,-1,0)$ and $(-1,1,3)$, and a string $\rho$ with endpoints $(1,-1,0)$ and $(1,1,3)$, and it is {\em of Type II} if it consists of a string $\lambda$ with endpoints $(-1,-1,0)$ and $(-1,1,3)$, and a string $\rho$ with endpoints $(1,1,0)$ and $(1,-1,3)$. 

\ignore{Two tangles $T_1,T_2$ are {\em equivalent} if there is an isotopic deformation of the cylinder, fixed on its boundary, which deforms $T_1$ into $T_2$. We note that, evidently, a tangle of Type I cannot be equivalent to a tangle of Type II.}

The {\em shadow} $U$ of a tangle $T$ is its projection onto the $xy$-plane, without over/under information at the crossings. Thus, regardless of whether $T$ is of Type I or II, $U$ consists of an arc $L$ (the projection of $\lambda$) with endpoints $(-1,-1)$ and $(-1,1)$ and an arc $R$ (the projection of $\rho$) with endpoints $(1,-1)$ and $(1,1)$. We refer the reader to Figure~\ref{fig:101}(a) (the part contained in $\Delta$) for an illustration of a tangle shadow.

{The {\em vertical diagram} of a tangle (or of a knot) is its projection onto the plane $y=2$, with over/under information at each crossing. Since every tangle $T$ we consider is $z$-monotone, the vertical diagram of $T$ is a braid diagram. This is {\em the braid diagram of $T$}. We define the {\em linking index} $\lk(T)$ of $T$ as the linking index of its braid diagram~\cite{muras2}. The tangles in Figure~\ref{fig:wh1}(a) and (b) have the same braid diagram, shown in Figure~\ref{fig:wh1}(c), and so the linking index of each of these tangles is $m/2$.}

\medskip
\noindent{\bf Remark. } {In all illustrations of vertical diagrams, the indicated coordinates of points are the $x$- and $z$-coordinates of these points, as they all have $y$-coordinate $2$.}
\vglue 0.15 cm
Our interest in tangles lies in Proposition~\ref{pro:key} below, which is the workhorse behind the proofs of Lemmas~\ref{lem:third} and~\ref{lem:fourth}. We use the following terminology, motivated by the definition of an $m$-consistent or $m$-reverse knot shadow. 

Let $m\ge 2$ be an even integer. We say that a tangle shadow $U$ has {\em rank $m$} if there exist crossings $1,\ldots,m$ between the arcs $L$ and $R$ of $U$, such that as we traverse $L$ from $(-1,-1)$ to $(-1,1)$, and also as we traverse $R$ from $(1,-1)$ to $(1,1)$, we encounter these crossings in precisely this order. In Figure~\ref{fig:101}(a) we illustrate a tangle shadow of rank $m$ (inside the disk $\Delta$). On the other hand, if as we traverse $L$ from $(-1,-1)$ to $(-1,1)$ we encounter these crossings in this order, but as we traverse $R$ from $(1,-1)$ to $(1,1)$ we encounter these crossings in the reverse order $m,\ldots,1$, then we say that $U$ has {\em rank $-m$}. In Figure~\ref{fig:c1}(a) we illustrate a tangle shadow of rank $-m$.

\begin{proposition}\label{pro:key}
Let $U$ be a tangle shadow, and let $m\ge 2$ be an even integer. If $U$ has rank $m$ (respectively, $-m$) then $U$ is the shadow of a tangle $T$ of Type I (respectively, of Type II) such that $|\lk(T)|=m/2$.
\end{proposition}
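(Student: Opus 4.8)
The plan is to produce an explicit $z$-monotone tangle $T$ whose projection to the $xy$-plane is exactly $U$, and then to compute $\lk(T)$. First I would fix the lift. Orient $L$ from $(-1,-1)$ to $(-1,1)$ and $R$ from $(1,-1)$ to $(1,1)$, and assign to the points of each arc a height in $[0,3]$ via a strictly increasing function of arclength. In the rank $m$ case I let both heights increase along the chosen orientations, so that $\lambda$ runs from $(-1,-1,0)$ to $(-1,1,3)$ and $\rho$ from $(1,-1,0)$ to $(1,1,3)$: this is a tangle of Type I. In the rank $-m$ case I instead reverse the height along $R$, so that $\rho$ runs from $(1,1,0)$ to $(1,-1,3)$: this is a tangle of Type II. In both cases the two strings are $z$-monotone and disjoint — distinct arclength parameters receive distinct heights, so the strands coincide in the plane only at crossings of $U$, where they then lie at different heights — and by construction their shadow is $U$. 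Hence $U$ is the shadow of a tangle $T$ of the asserted type, no matter how the remaining freedom in the height functions is used.

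It remains to use that freedom to force $|\lk(T)|=m/2$. Although $\lk(T)$ is defined through the braid diagram, it is an isotopy invariant of $T$ rel its endpoints on the boundary of the cylinder, so I may instead read it off the planar diagram: $\lk(T)=\tfrac12\sum_{c}\epsilon(c)$, where $c$ ranges over the crossings of $U$ at which $L$ meets $R$, and $\epsilon(c)=\pm1$ is the sign determined by the upward orientations of the two strands together with the over/under that the relative heights prescribe at $c$. Self-crossings of $L$ and of $R$ do not enter, since they do not involve both strings. The target is thus $\sum_c\epsilon(c)=\pm m$.

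The rank hypothesis is precisely what delivers a contribution of $\pm m$ from the crossings $1,\dots,m$. Read in the direction of increasing height on each strand — which on $R$ is the reverse of its traversal exactly in the rank $-m$ case — these crossings occur in the common order $1,2,\dots,m$. I would therefore choose the height functions so that crossing $i$ is met at nearly equal heights on the two strands, with these heights strictly increasing in $i$, and I would select the over/under at each crossing $i$ so as to make $\epsilon(i)=+1$ (matching the over/under to the planar sign of the crossing). Flipping an over/under merely flips the corresponding sign, so each $\epsilon(i)=+1$ is individually attainable; the content of the rank hypothesis is that the coherent ordering along both arcs lets all these prescriptions be met simultaneously by one pair of monotone height functions. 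Granting this, the crossings $1,\dots,m$ contribute exactly $+m$.

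The main obstacle is the remaining crossings of $U$ between $L$ and $R$: a priori they too contribute to $\sum_c\epsilon(c)$, and for $|\lk(T)|=m/2$ their net contribution must vanish. This is exactly where I would bring in the techniques of Taniyama's proof in \cite{taniyama2}. Adapting his method of prescribing a resolution along a coherently ordered family of crossings, I would interleave the heights of the extraneous crossings among the synchronized heights of the core so that each extraneous passage of one arc across the other is met an even number of times in sign-cancelling pairs, while simultaneously certifying that the entire over/under pattern is realizable by a single pair of increasing height functions. The verification of this simultaneous realizability and cancellation is the technical heart of the argument, and I expect it to be the hardest step. Once it is in place, $\sum_c\epsilon(c)=\pm m$, so $|\lk(T)|=m/2$; the value $m/2$ is of course what the common braid diagram of Figure~\ref{fig:wh1}(c) predicts.
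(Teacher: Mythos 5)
Your overall strategy is the right one, and you even land on the correct external engine (Taniyama's techniques from \cite{taniyama2}), but there are two genuine gaps, and the second is not a technicality.

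First, you assert that because $\lk(T)$ is an isotopy invariant rel the boundary of the cylinder, you may compute it from the $xy$-projection as $\tfrac12\sum_c\epsilon(c)$ over the mutual crossings of $L$ and $R$. That inference is invalid: for two \emph{open} strings, half the signed count of mutual crossings is not independent of the projection direction (it is only a well-defined invariant for closed components), so the planar count need not equal the braid-diagram count that defines $\lk(T)$. The paper's proof exists precisely to bridge this: it closes $U$ up into a $2$-component link shadow $S=U\cup A\cup B$ by attaching external arcs $A,B$, lifts them to strings $\alpha,\beta$ outside the cylinder whose \emph{vertical} projections are disjoint from each other and from the braid diagram of the tangle, and then observes that the honest linking number $\Lk(M_0)$ of the resulting link equals $\lk(T_0)$ because all crossings of the vertical diagram come from the braid. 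Without some such device your equation $\lk(T)=\tfrac12\sum_c\epsilon(c)$ is unjustified.

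Second, and more seriously, the step you yourself call ``the technical heart of the argument'' --- simultaneously realizing the prescribed over/unders by a single pair of monotone height functions while forcing the extraneous $L$--$R$ crossings to cancel in sign --- is exactly the content of the proposition, and you leave it as an expectation rather than an argument. Note that for a $z$-monotone lift you cannot choose over/unders freely: the height functions determine them, so realizability is a genuine constraint. The paper does not reprove this either, but by first closing $U$ into a link it puts itself in the precise setting of \cite{taniyama2} (Algorithm 4 and the proof of Theorem 4), where a resolution of a $2$-component link shadow with $|\Lk|=m/2$ is actually constructed; the closing-up in the first gap is thus not cosmetic but is what makes the citation applicable. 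As written, your proposal reduces the proposition to an unproved claim of comparable difficulty.
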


We defer the proof of this statement for the moment, and give the proofs of Lemmas~\ref{lem:third} and~\ref{lem:fourth}. We have a final observation before proceeding to the proofs. Let us say that two shadows $S,S'$ are {\em analogous} if $S$ resolves into a knot $K$ if and only if $S'$ resolves into a knot $K'$ isotopic to $K$. The observation we use is that if $x$ is a crossing in a shadow $S$, then using a standard Riemann stereographic projection argument we may turn $S$ into an analogous shadow $S'$ in which $x$ is incident with the unbounded face.

\ignore{
\begin{figure}[H]
\centering
\scalebox{0.38}{\input{jau1.pdf_t}}
\caption{These tangles play a key role in the proofs of Lemmas~\ref{lem:third} and~\ref{lem:fourth}.}
\label{fig:wh1}
\end{figure}
}

\ignore{
\begin{figure}[ht!]
\centering
\scalebox{0.38}{\input{qr1.pdf_t}}
\caption{In (a) we illustrate a tangle, and in (b) and (c) we illustrate the shadow and diagram of $T$, respectively, which result by projecting $T$ onto the $xy$-plane.}
\label{fig:att}
\end{figure}
}

\begin{proof}[Proof of Lemma~\ref{lem:third}]
Let $S$ be an $m$-consistent shadow on the $xy$-plane, for some even integer $m\ge 2$. We recall that this means that $S$ has a crossing $0$, such that the subshadows $S_1,S_2$ based at $0$ satisfy that there are crossings $1,2,\ldots,m$ between $S_1$ and $S_2$ that we encounter in this order as we traverse each of $S_1$ and $S_2$, starting at $0$. Using the observation mentioned before this proof, we may assume that $0$ is incident with the unbounded face of $S$. Performing a suitable self-homeomorphism of the plane, we may further assume that the layout of $S$ is as shown in Figure~\ref{fig:101}(a). In particular, with the exception of $0$, all crossings of $S$ are contained inside the disk $\Delta$. In this illustration, $S_1$ is the black subshadow and $S_2$ is the gray subshadow. 

\begin{figure}[ht!]
\centering
\scalebox{0.26}{\input{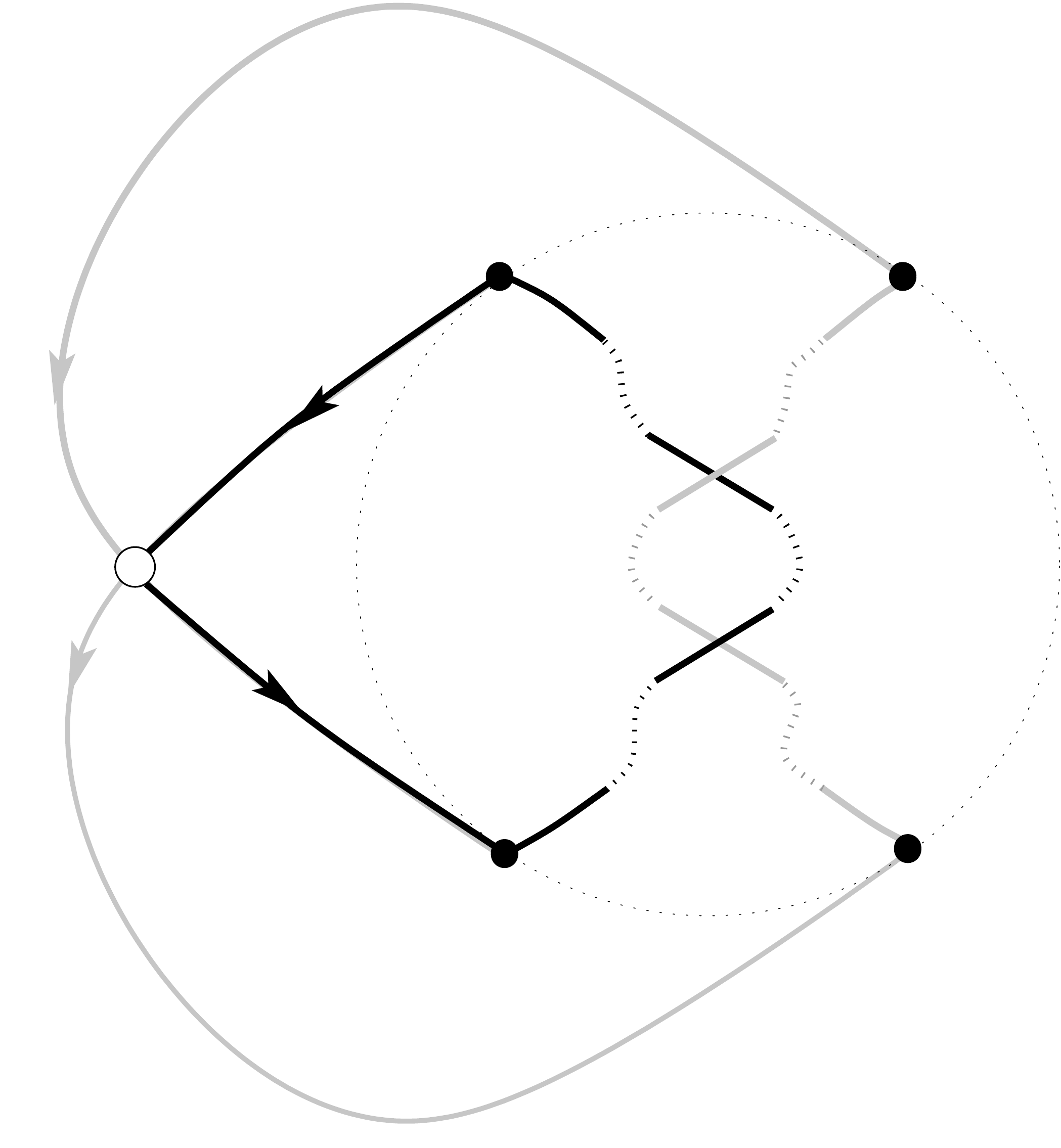_t}}\hglue 1 cm
\scalebox{0.27}{\input{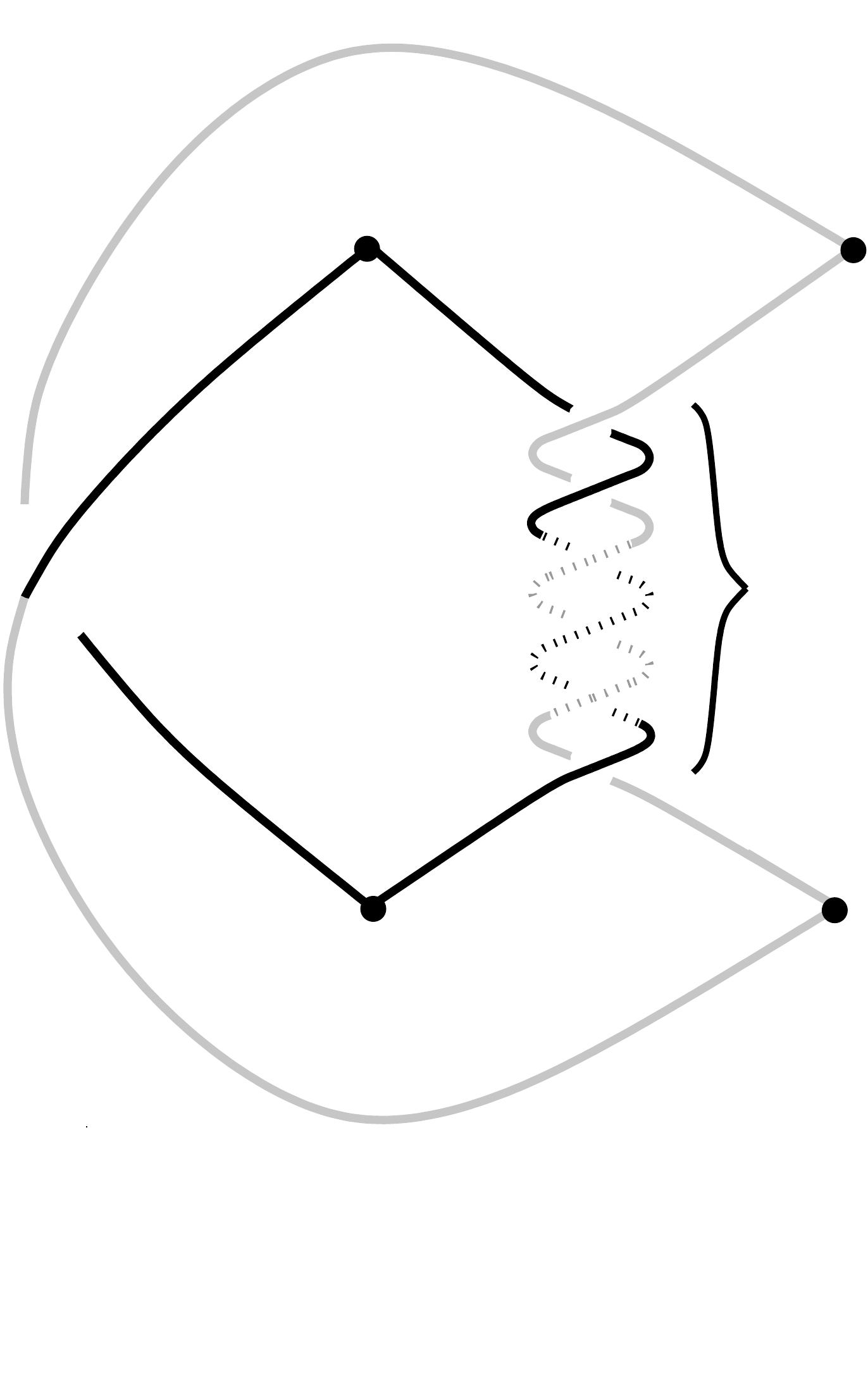_t}}
\caption{Illustration of the proof of Lemma~\ref{lem:third}.}
\label{fig:101}
\end{figure}

The $m$-consistency of $S$ implies that the part $U$ of $S$ inside $\Delta$ is a tangle shadow of rank $m$. Thus it follows from Proposition~\ref{pro:key} that $U$ is the shadow of a tangle $T$ of Type I such that $|\lk(T)|=m/2$. We assume that $\lk(T)=m/2$, as the arguments in the case $\lk(T)=-m/2$ are totally analogous.

{It is easy to see that there exist strings $\alpha$ and $\beta$ in $3$-space, disjoint from the interior of the cylinder $\Delta\times[0,3]$, such that (i) the endpoints of $\alpha$ (respectively, $\beta$) are $(-1,-1,0)$ and $(-1,1,3)$ (respectively, $(1,-1,0)$ and $(1,1,3)$); (ii) the projection of $\alpha\cup\beta$ onto the $xy$-plane is $S\setminus U$; and (iii) the vertical projections of $\alpha$ and $\beta$ are the strands $a$ and $b$, respectively, shown in Figure~\ref{fig:101}(b).}

{Let $K$ be the knot obtained by adding $\alpha\cup\beta$ to the tangle $T$. Since $U$ is the shadow of $T$, and $S\setminus U$ is the shadow of $\alpha\cup\beta$, it follows that $S$ resolves into $K$. Consider now the vertical diagram $D$ of $K$. The part of $D$ that corresponds to $\alpha$ and $\beta$ are the strands $a$ and $b$; the rest of $D$ is the braid diagram of $T$. Since $\lk(T)=m/2$, a sequence of Reidemeister moves of Type II on this braid diagram takes this part of $D$ into the braid diagram shown in Figure~\ref{fig:101}(b). Thus $D$ is equivalent to the diagram in Figure~\ref{fig:101}(b), which is a diagram of a torus knot $T_{2,m+1}$. We conclude that $S$ resolves into a torus knot $T_{2,m+1}$.}
\end{proof}


\begin{proof}[Proof of Lemma~\ref{lem:fourth}]
Let $S$ be an $m$-reverse shadow, where $m\ge 2$ is an even integer. Similarly as in the proof of Lemma~\ref{lem:third}, we may assume that the layout of $S$ is as shown in Figure~\ref{fig:c1}(a). In this case, since $S$ is $m$-reverse it follows that the part $U$ of $S$ inside $\Delta$ is a tangle shadow of rank $-m$. Thus it follows from Proposition~\ref{pro:key} that $U$ is the shadow of a tangle $T$ of Type II such that $|\lk(T)|=m/2$. As in the proof of Lemma~\ref{lem:third} we assume that $\lk(T)=m/2$, as the arguments in the case $\lk(T)=-m/2$ are totally analogous.

\begin{figure}[ht!]
\centering
\scalebox{0.25}{\input{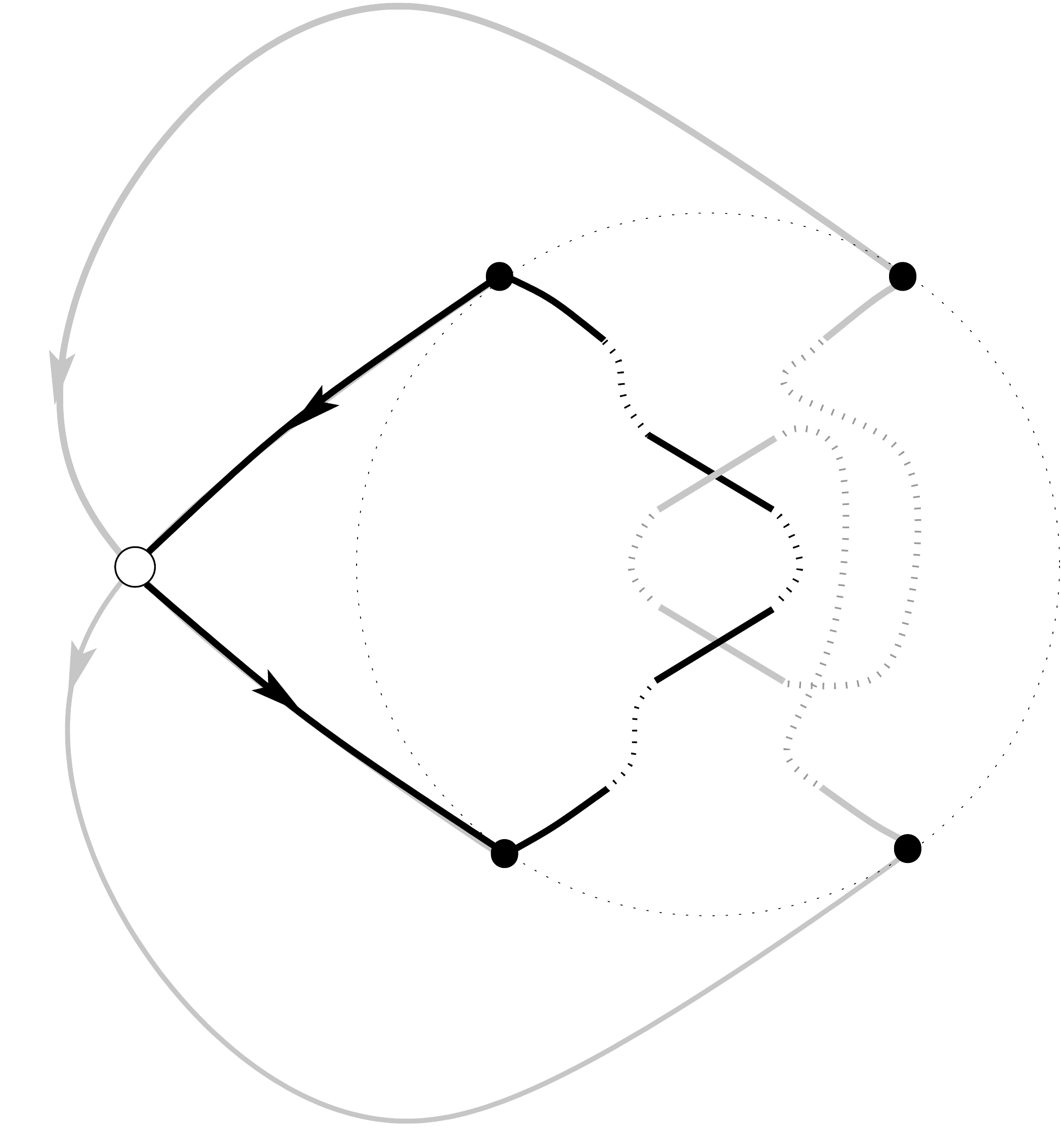_t}}\hglue 0.8 cm
\scalebox{0.29}{\input{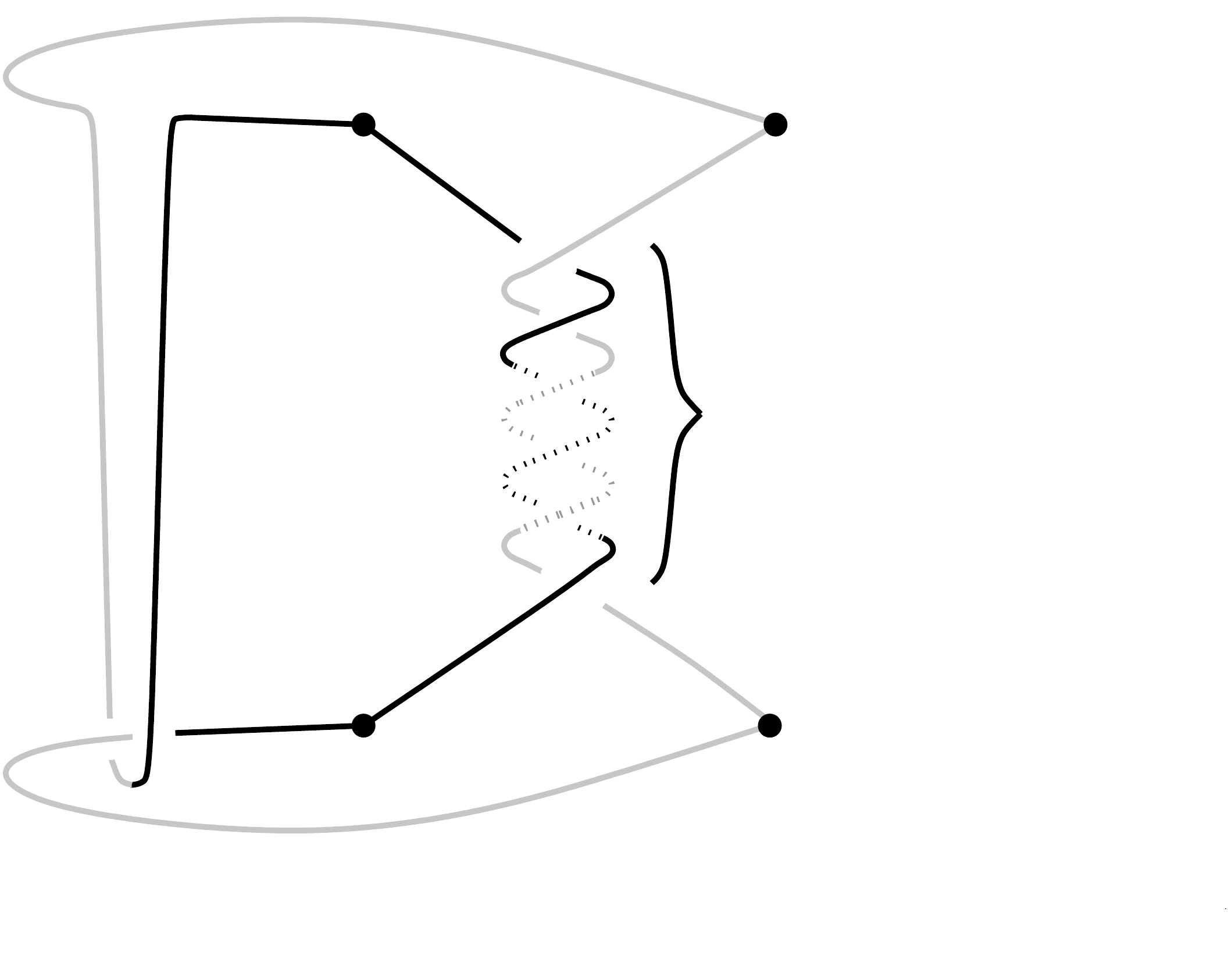_t}}
\caption{Illustration of the proof of Lemma~\ref{lem:fourth}.}
\label{fig:c1}
\end{figure}

{It is easy to see that there exist strings $\alpha$ and $\beta$ in $3$-space, disjoint from the interior of the cylinder $\Delta\times[0,3]$, such that (i) the endpoints of $\alpha$ (respectively, $\beta$) are $(-1,-1,0)$ and $(-1,1,3)$ (respectively, $(1,1,0)$ and $(1,-1,3)$); (ii) the projection of $\alpha\cup\beta$ onto the $xy$-plane is $S\setminus U$; and (iii) the vertical projections of $\alpha$ and $\beta$ are the strands $a$ and $b$, respectively, shown in Figure~\ref{fig:c1}(b).}

{Let $K$ be the knot obtained by adding $\alpha\cup\beta$ to $T$. Using analogous arguments as in the last part of the proof of Lemma~\ref{lem:third}, it follows that $S$ resolves into a twist knot $T_m$.}
\end{proof}

\begin{proof}[Proof of Proposition~\ref{pro:key}]
We give the proof of the proposition for tangle shadows that have rank $m$, as the proof for tangle shadows with rank $-m$ is totally analogous. 

Let $m\ge 2$ be an even integer, and let $U$ be a tangle shadow of rank $m$, as illustrated in Figure~\ref{fig:to1}(a). {Let $A$ and $B$ be the arcs also illustrated in that figure. Note that $S=U\cup A\cup B$ is a shadow of a $2$-component link. }

{It is easy to see that there exist strings $\alpha$ and $\beta$ in $3$-space, disjoint from the interior of the cylinder $\Delta\times[0,3]$, such that (i) the endpoints of $\alpha$ (respectively, $\beta$) are $(-1,-1,0)$ and $(-1,1,3)$ (respectively, $(1,-1,0)$ and $(1,1,3)$); (ii) the projections of $\alpha$ and $\beta$ onto the $xy$-plane are $A$ and $B$, respectively; and (iii) the vertical projections of $\alpha$ and $\beta$ are the strands $a$ and $b$, respectively, shown in Figure~\ref{fig:to1}(b).}

\begin{figure}[ht!]
\centering
\hglue -0.7 cm \scalebox{0.27}{\input{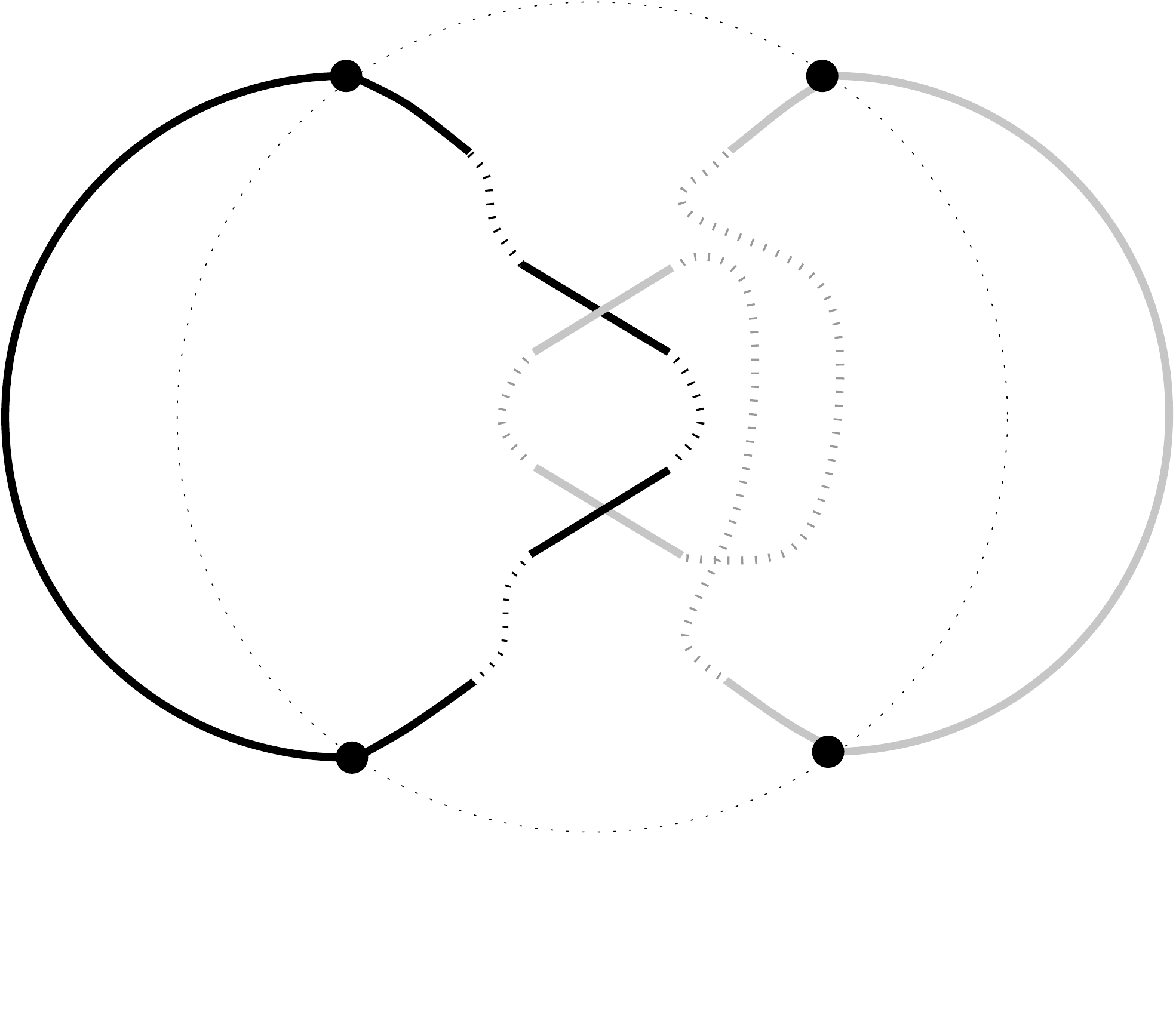_t}}\hglue 1 cm 
\scalebox{0.24}{\input{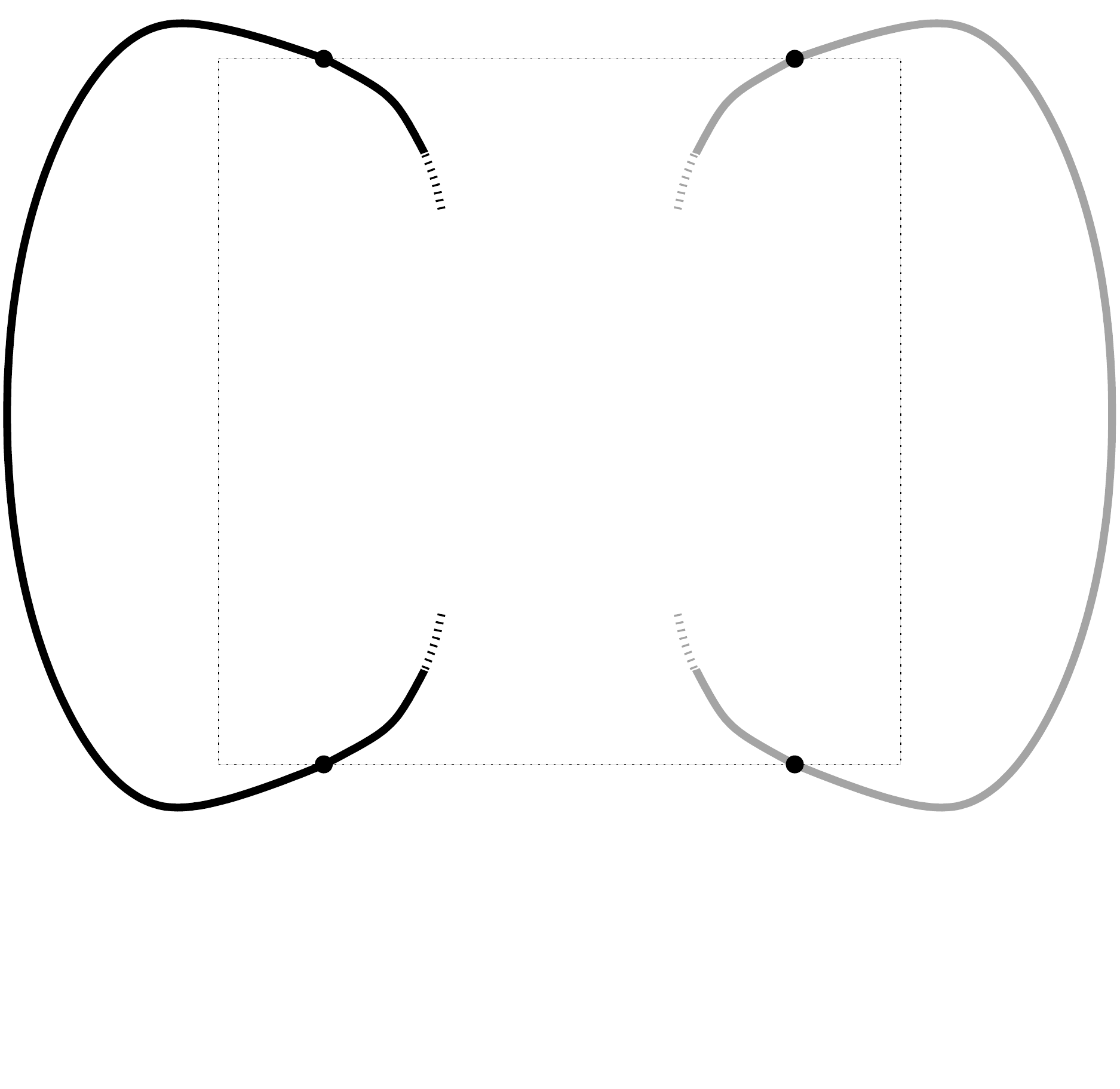_t}}
\caption{Illustration of the proof of Proposition~\ref{pro:key}.} 
\label{fig:to1}
\end{figure}

{The strategy to prove the proposition is to show that $S$ is the shadow of a link that satisfies certain properties. We start by letting $\mm$ be the set of all links $M$ that have $S$ as their shadow, and such that: (i) the part of $M$ that projects to $U$ is a tangle $T$ of Type I; and (ii) the part of $M$ that projects to $A\cup B$ is $\alpha\cup\beta$.}

Using that $U$ has rank $m$, a straightforward adaptation of the techniques and arguments in~\cite[Algorithm 4]{taniyama2} and~\cite[Proof of Theorem 4]{taniyama2} shows that there is a link $M_0\in\mm$ whose linking number $\Lk(M_0)$ satisfies $|\Lk(M_0)|=m/2$. (We use $\Lk(M_0)$ to denote the linking number of a link $M_0$, to distinguish it from the linking index $\lk(T)$ of a tangle $T$). 

Let $T_0$ be the part of $M_0$ that is a tangle of Type I whose shadow is $U$. Let $D$ be the vertical diagram of $M_0$ (see Figure~\ref{fig:to1}(b)). The vertical projections of $\alpha$ and $\beta$ in $D$ do not intersect each other, and they do not intersect the projection of $T_0$ (which is the braid diagram of $T_0$). Thus all the crossings in $D$ are the crossings in the braid diagram of $T_0$. Therefore $\Lk(M_0)=\lk(T_0)$, and so $|\lk(T_0)|=m/2$.
\end{proof}

\section{The four relevant types of shadows in terms of Gauss codes}\label{sec:gauss}

In this section we take a first step toward the proof of Lemma~\ref{lem:structure}, finding conditions, in terms of Gauss codes, that guarantee that a shadow is $m$-nontrivial, or $m$-decomposable, or $m$-consistent, or $m$-reverse. This will turn the proof of Lemma~\ref{lem:structure} into a purely combinatorial problem.



We start with a brief review of the notion of a Gauss code of a shadow $S$. Label the crossing points of $S$, and let $p$ be an arbitrary noncrossing point of $S$. The {\em Gauss code} $\omega$ of $S$ {\em starting at $p$} is the word obtained by traversing $S$ and noting each crossing point we encounter. Thus every label occurs exactly twice in $\omega$: if the crossings of $S$ are labelled $1,\ldots,n$, then $\omega$ is a permutation of the multiset $\{1,1,\ldots,n,n\}$. 


\ignore{
\begin{figure}[ht!]
\centering
\hglue -0.8 cm \scalebox{0.5}{\input{aga2.pdf_t}}
\caption{The Gauss code of this shadow, starting at $p$, is $1\, 4\, 5\, 2\, 4\, 1\, 3\, 5\, 2\, 3$.} 
\label{fig:1f}
\end{figure}
}

We adopt the following standard terminology. A {\em substring} of a word $a_1a_2\cdots a_t$ is a word of the form $a_i a_{i+1}\cdots a_{j-1} a_{j}$, for some $1\le i\le j\le t$. A {\em subword} of $a_1a_2\cdots a_t$ is a word of the form $a_{i_1} a_{i_2} \cdots a_{i_j}$, where $1 \le i_1 < i_2 < \cdots < i_j \le t$. We adhere to the convention to use $\sigma\Ds\omega$ to denote that $\sigma$ is a subword of $\omega$.


{We start by finding a condition for a Gauss code $\omega$ that guarantees that its corresponding shadow $S$ is $m$-nontrivial.} We say that a substring $\alpha$ of $\omega$ is {\em good} if it contains distinct symbols $a_i,a_j,a_k$, such that the following hold:

\smallskip

\begin{description}
\item[(1)] no symbol of $\omega$ has both occurrences in between the two occurrences of $a_i$; and
\item[(2)] $\alpha$ contains both occurrences of each of $a_i, a_j$ and $a_k$, and each of $a_j$ and $a_k$ occurs exactly once in between the occurrences of $a_i$.
\end{description}

\smallskip

Let $A$ be a open subarc of $S$, and let $\alpha$ be the substring that is the part of $\omega$ that corresponds to the traversal of $A$. Suppose that $\alpha$ is good. Then {(1)} implies that $a_i$ is the root of a loop $L$ contained in $A$, and {(2) implies} that $L$ is crossed at least twice in $A$. That is, $A$ is a nontrivial open subarc of $S$. 

{We say that a Gauss code is $m$-{\em good} if it is the concatenation of $m$ good substrings. The observation in the previous paragraph implies the following.}

\begin{fact}\label{fac:non}
Let $S$ be a shadow, and let $\omega$ be a Gauss code of $S$. If $\omega$ {is $m$-good}, then $S$ is $m$-nontrivial.
\end{fact}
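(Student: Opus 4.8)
The plan is to deduce the statement almost immediately from the per-subarc observation recorded in the paragraph just before it, which already carries all of the geometric content: it shows that whenever an open subarc $A$ of $S$ traces out a good substring $\alpha$ of $\omega$, the arc $A$ is nontrivial. Granting that observation, what remains is only to convert the combinatorial factorization of $\omega$ into $m$ good substrings into a geometric decomposition of $S$ into $m$ open subarcs, and then to apply the observation to each piece.

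Concretely, I would first unwind the hypothesis: since $\omega$ is $m$-good, write $\omega=\alpha_1\alpha_2\cdots\alpha_m$ with each $\alpha_i$ a good substring. Because $\omega$ is read off by traversing $S$ from a fixed non-crossing basepoint $p$, the $m$ consecutive blocks of symbols $\alpha_1,\ldots,\alpha_m$ correspond to $m$ consecutive stretches of the traversal. I would realize the $m$ factorization boundaries by choosing non-crossing points $p=p_0,p_1,\ldots,p_{m-1}$ on $S$ (set $p_m:=p_0$), with $p_{i-1}$ and $p_i$ lying immediately before and after the stretch recording $\alpha_i$, and I would set $A_i$ to be the open subarc traversed from $p_{i-1}$ to $p_i$. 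By construction the substring of $\omega$ corresponding to $A_i$ is exactly $\alpha_i$, and the arcs $A_1,\ldots,A_m$ concatenate to all of $S$.

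Finally, applying the cited observation to each pair $(A_i,\alpha_i)$ yields that every $A_i$ is a nontrivial open subarc. Hence $S$ is the concatenation of $m$ nontrivial open subarcs, which is precisely the assertion that $S$ is $m$-nontrivial, completing the argument.

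The single point requiring any care --- and hence the (mild) main obstacle --- is the realization of the factorization boundaries by genuine non-crossing points, so that each $A_i$ is a bona fide open subarc with distinct endpoints. This is harmless: between any two consecutive crossings encountered along the traversal there is an interior non-crossing point available to serve as a break point, and the basepoint $p$ supplies the first one. (If one wishes to include the degenerate case $m=1$, where $S$ itself must be viewed as a single nontrivial open subarc, it suffices to place one break point and traverse $S$ from just after it to just before it.) Since no quantitative estimate enters, the whole argument is essentially a matter of chasing the relevant definitions.
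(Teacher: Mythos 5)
Your proposal is correct and follows exactly the route the paper intends: the paper gives no separate proof of this Fact, presenting it as an immediate consequence of the preceding paragraph's observation that a good substring corresponds to a nontrivial open subarc, which is precisely the observation you invoke on each of the $m$ blocks. Your extra care about placing the break points at non-crossing points is a harmless elaboration of the same argument.
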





\ignore{
\begin{figure}[ht!]
\centering
\scalebox{0.75}{\input{al11.pdf_t}}
\caption{The Gauss code $\omega$ of the shadow $S$ on the left hand side, starting at $p$, is $4\,1\,3\,2\,1\,a\,5\,\,11\,\,10\,\,8\,9\,6\,\,11\,\,10\,\,7\,9\,a\,4\,2\,3\,8\,7\,6\,5$. Let $\alpha=4\,1\,3\,2\,1\,$, $\gamma=5\,\,11\,\,10\,\,8\,9\,6\,\,11\,\,10\,\,7\,$ $9$, and $\beta=4\,2\,3\,8\,7\,6\,5$. Then $\omega=\alpha\, a\, \gamma\, a\, \beta$. Let $S_1$ (center) and  $S_2$ (right) be the subshadows of $S$ based at $x$. The subword $\mea{\alpha\beta}=4\,1\,3\,2\,1\, 4\,2\,3$ is a Gauss code of $S_1$, and $\mea{\gamma}=11\,\,10\,\,9\,\,11\,\,10\,\,9$ is a Gauss code of $S_2$.}
\label{fig:al}
\end{figure}
}

{To investigate the Gauss codes of $m$-consistent, $m$-reverse, and $m$-nontrivial shadows,} we will use the following terminology. Let $S$ be a shadow, and let $\omega$ be a Gauss code of $S$. Two symbols $a_i, a_j$ of $\omega$ form an {\em alternating pair} if either $a_i a_j a_i a_j\Ds\omega$ or $a_j a_i a_j a_i\Ds\omega$. A symbol $a$ of $\omega$ is {\em nugatory} if it corresponds to a nugatory crossing of $S$. It is easy to see that $a$ is a nugatory symbol if and only if $a$ does not form part of an alternating pair. Finally, if $\sigma\Ds\omega$, then $\mea{\sigma}$ denotes the subword of $\sigma$ obtained by eliminating the symbols that appear only once in $\sigma$. 

We make essential use of the following easy remark. 

\begin{observation}\label{obs:dosmil}
Let $S$ be a shadow, let $\omega$ be a Gauss code of $S$, and let $a$ be a crossing of $S$. Write $\omega$ as a concatenation $\alpha\, a\, \gamma\, a \, \beta$. Then $\mea{\alpha\beta}$ is a Gauss code of one  subshadow of $S$ based at $a$, and $\mea{\gamma}$ is a Gauss code of the other subshadow.
\end{observation}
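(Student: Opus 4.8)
The plan is to reinterpret the combinatorial deletion operation $\mea{\cdot}$ as the geometric operation of restricting to the self-crossings of a subshadow. First I would fix a parametrization of $S$ as a closed curve that starts at the noncrossing point $p$ where $\omega$ begins, assigning $p$ the parameter $0$; the crossing $a$ is then visited at two interior parameters $0<s<t<1$ (interior because $p$ is not a crossing). By the very definition of the Gauss code, $\alpha$ consists of the crossings met in the open interval $(0,s)$, $\gamma$ of those met in $(s,t)$, and $\beta$ of those met in $(t,1)$. The two subshadows of $S$ based at $a$ are exactly $S_1$, the closed subarc traversed from $a$ at parameter $s$ to $a$ at parameter $t$, and $S_2$, the complementary closed subarc traversed through $(t,1)$ and $(0,s)$; in each of these closed curves the point $a$ is visited only once, hence is a regular point.

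Next I would classify each crossing $b\neq a$ according to where its two visits fall. The point $b$ is a self-crossing of $S_1$ precisely when both of its occurrences lie in $\gamma$; it is a self-crossing of $S_2$ precisely when both occurrences lie in $\alpha\beta$ (whether both in $\alpha$, both in $\beta$, or one in each); and in the remaining case exactly one occurrence lies in $\gamma$ and the other in $\alpha\beta$, so that $b$ is a crossing \emph{between} $S_1$ and $S_2$ and is therefore a regular point of each of them. This trichotomy is the only genuine bookkeeping in the argument, and it is where I would be most careful, since the symbols that survive $\mea{\cdot}$ are exactly the doubled ones.

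Finally I would read off the two Gauss codes directly. Traversing $S_1$ from $a$ in the inherited direction passes through the crossings of $\gamma$ in order, and the self-crossings encountered are exactly the symbols occurring twice in $\gamma$; deleting the once-occurring symbols (the $S_1$--$S_2$ crossings) yields precisely this sequence, so $\mea{\gamma}$ is a Gauss code of $S_1$. Traversing $S_2$ from $p$ in the inherited direction runs first through $(0,s)$, recording the crossings of $\alpha$, and then, after passing through the endpoint $a$, through $(t,1)$, recording the crossings of $\beta$; the self-crossings met in order are exactly the symbols surviving in $\mea{\alpha\beta}$, so $\mea{\alpha\beta}$ is a Gauss code of $S_2$. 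The only subtle point is this last step: one must confirm that the correct concatenation order is $\alpha\beta$ (rather than $\beta\alpha$) and that $p$, not $a$, is the natural basepoint for $S_2$ --- equivalently, that starting the traversal of $S_2$ at $a$ produces $\mea{\beta\alpha}$, a cyclic rotation of $\mea{\alpha\beta}$ and hence a Gauss code of the same shadow. Once the parametrization makes the roles of $\alpha,\gamma,\beta$ explicit, this is immediate.
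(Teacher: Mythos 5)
Your argument is correct and complete. The paper states this Observation as an ``easy remark'' and supplies no proof at all, so there is nothing to compare against; your parametrization-plus-trichotomy argument is exactly the justification the authors leave implicit, and you correctly handle the only two delicate points — that the symbols surviving $\mea{\cdot}$ (the doubled ones) are precisely the self-crossings of the corresponding subshadow, and that the basepoint issue for $\mea{\alpha\beta}$ versus $\mea{\beta\alpha}$ is immaterial since they are cyclic rotations of one another.
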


We now consider $m$-consistent shadows. Let $S$ be a shadow, and let $\omega$ be a Gauss code of $S$. {We say that $\omega$ is $m$-{\em increasing} if it has symbols $a_0,a_1,\ldots,a_m$ such that $a_0 \, a_1\, \cdots\, a_m\, a_0 \, a_1\, \cdots a_m\Ds\omega$.} If $\omega$ has such a subword, then it follows from Observation~\ref{obs:dosmil} that the Gauss codes of both subshadows of $S$ based at $a_0$ have $a_1a_2\ldots a_m$ as a subword. In view of the definition of an $m$-consistent shadow, this immediately implies the following.

\begin{fact}\label{fac:con}
Let $S$ be a shadow, and let $\omega$ be a Gauss code of $S$. If $\omega$ {is $m$-increasing}, then $S$ is $m$-consistent.
\end{fact}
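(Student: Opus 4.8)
The plan is to read off, directly from the $m$-increasing subword, a crossing of $S$ together with $m$ further crossings that witness $m$-consistency. Set $0 := a_0$ and $i := a_i$ for $1\le i\le m$. Since every symbol occurs exactly twice in a Gauss code, the two displayed copies of $a_0$ in the subword $a_0\,a_1\cdots a_m\,a_0\,a_1\cdots a_m$ are precisely the two occurrences of $a_0$ in $\omega$, so I can write $\omega=\alpha\,a_0\,\gamma\,a_0\,\beta$ accordingly. By Observation~\ref{obs:dosmil} the two subshadows of $S$ based at $0$ are traversed, starting at $0$, by $\gamma$ and by $\beta\alpha$; call them $S_2$ and $S_1$, respectively.

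First I would locate the occurrences of $a_1,\ldots,a_m$ inside this decomposition. Reading the subword and using that $\Ds$ preserves the order of $\omega$, each first copy of $a_i$ lies strictly between the two copies of $a_0$, hence inside $\gamma$, while each second copy lies after the second $a_0$, hence inside $\beta$; moreover these occurrences appear in the order $a_1,\ldots,a_m$ both within $\gamma$ and within $\beta$. In particular every $a_i$ has exactly one occurrence in $\gamma$ and one in $\beta$ (and none in $\alpha$), so $a_i$ is a point at which $S_1$ and $S_2$ meet. Thus $S_1$ and $S_2$ cross each other at the $m$ points $1,\ldots,m$.

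It then remains to verify the order condition in the definition of $m$-consistency. Traversing $S_2$ from $0$ is exactly reading $\gamma$, in which $1,\ldots,m$ occur in this order; traversing $S_1$ from $0$ is reading $\beta$ followed by $\alpha$, and since $1,\ldots,m$ all occur in $\beta$ and in this order, we again encounter them as $1,\ldots,m$. Hence $S$ is $m$-consistent, witnessed by the crossing $0$, which is precisely the assertion of Fact~\ref{fac:con}.

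The one place requiring care is the bookkeeping of occurrences. The symbols $1,\ldots,m$ are crossings \emph{between} $S_1$ and $S_2$, not self-crossings of either, so the literal subshadow Gauss codes $\mea{\gamma}$ and $\mea{\alpha\beta}$ produced by Observation~\ref{obs:dosmil} do not contain them; what the definition actually needs is the cyclic order in which each subshadow passes through these points, and that order is read off directly from $\gamma$ and from $\beta$. The only genuinely delicate step is getting the traversal of $S_1$ right—that starting at $0$ it reads $\beta$ then $\alpha$, so that the relevant occurrences of $1,\ldots,m$ are the ones in $\beta$—which is where the orientation convention must be handled with attention.
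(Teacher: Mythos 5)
Your proof is correct and follows essentially the same route as the paper's (the short paragraph preceding the Fact): split $\omega$ as $\alpha\,a_0\,\gamma\,a_0\,\beta$ at the two occurrences of $a_0$, observe that the first copies of $a_1,\ldots,a_m$ land in $\gamma$ and the second copies in $\beta$, and read the traversal orders off Observation~\ref{obs:dosmil}. If anything, yours is the more careful version --- the paper loosely asserts that the Gauss codes of both subshadows ``have $a_1a_2\ldots a_m$ as a subword'' even though $\mea{\cdot}$ deletes symbols occurring only once, and your closing remark correctly identifies that what is really needed is the order in which each subshadow passes through these mutual crossings, read from $\gamma$ and $\beta$ respectively.
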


{We say that $\omega$ is $m$-{\em decreasing} if it has symbols $a_0,a_1,\ldots,a_m$ such that $a_0 \, a_1\, \cdots$ $ a_m a_0 \, a_m\,\cdots a_1\Ds\omega$.} If $\omega$ has such a subword, then by Observation~\ref{obs:dosmil} the Gauss code of one of the subshadows of $S$ based at $a_0$ has $a_1a_2\ldots a_m$ as a subword, and the Gauss code of the other subshadow has $a_m \ldots a_2a_1$ as a subword. The following is then an immediate consequence of the definition of an $m$-reverse shadow.

\begin{fact}\label{fac:rev}
Let $S$ be a shadow, and let $\omega$ be a Gauss code of $S$. If $\omega$ {is $m$-decreasing}, then $S$ is $m$-reverse.
\end{fact}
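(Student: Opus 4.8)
The plan is to run the very same argument that establishes Fact~\ref{fac:con}, now carrying the reversal that distinguishes the $m$-decreasing pattern from the $m$-increasing one. Assume $\omega$ is $m$-decreasing, so it admits a subword $a_0\,a_1\,\cdots\,a_m\,a_0\,a_m\,\cdots\,a_1$. Because $\omega$ is a Gauss code, every symbol occurs exactly twice, so the two occurrences of each $a_i$ displayed in this subword are its \emph{only} occurrences in $\omega$. First I would invoke Observation~\ref{obs:dosmil} at the crossing $a_0$, writing $\omega=\alpha\,a_0\,\gamma\,a_0\,\beta$; the two $a_0$'s appearing in the witnessing subword are precisely the two delimiting occurrences of this decomposition.

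Next I would pin down where $a_1,\ldots,a_m$ sit. Each $a_i$ has its first occurrence strictly between the two $a_0$'s of the subword, hence inside $\gamma$, and these first occurrences appear in $\gamma$ in the order $a_1,a_2,\ldots,a_m$; each $a_i$ has its second occurrence after the second $a_0$, hence inside $\beta$, and there they appear in the order $a_m,a_{m-1},\ldots,a_1$. In particular none of $a_1,\ldots,a_m$ lies in $\alpha$, and each occurs exactly once in $\gamma$ and exactly once in $\alpha\beta$. A symbol with one occurrence in $\gamma$ and one in $\alpha\beta$ is precisely a crossing between the two subshadows $S_1,S_2$ of $S$ based at $a_0$ — these are exactly the symbols deleted in passing to $\mea{\gamma}$ and $\mea{\alpha\beta}$ in Observation~\ref{obs:dosmil} — so the points $a_1,\ldots,a_m$ are $m$ crossings at which $S_1$ and $S_2$ meet.

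Finally I would read off the two traversal orders. Traversing the subshadow recorded by $\gamma$ from $a_0$ in the inherited orientation, one meets the crossings in the order they occur in $\gamma$, hence meets $a_1,\ldots,a_m$ as $a_1,a_2,\ldots,a_m$; traversing the subshadow recorded by $\alpha\beta$ from $a_0$, the inherited orientation enters $\beta$ first, and since all of $a_1,\ldots,a_m$ lie in $\beta$ one meets them as $a_m,a_{m-1},\ldots,a_1$. Taking $a_0$ for the crossing $0$ and $a_1,\ldots,a_m$ for the crossings $1,\ldots,m$, this is verbatim the definition of an $m$-reverse shadow, so $S$ is $m$-reverse. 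The one place demanding care — and the only place where the decreasing pattern genuinely enters — is the orientation bookkeeping for the second subshadow: I must confirm that ``starting at $a_0$'' with the orientation inherited from $\omega=\cdots\alpha\,a_0\,\gamma\,a_0\,\beta\cdots$ means traversing $\beta$ before $\alpha$, so that the order recorded is truly the reverse $a_m,\ldots,a_1$ rather than its mirror. Everything else is a direct transcription of the $m$-increasing case.
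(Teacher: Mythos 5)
Your argument is correct and is essentially the paper's own proof of this fact: both split $\omega$ as $\alpha\,a_0\,\gamma\,a_0\,\beta$ via Observation~\ref{obs:dosmil}, observe that each $a_i$ occurs once in $\gamma$ and once in $\beta$ (so the $a_i$ are crossings between the two subshadows based at $a_0$), and read off the orders $a_1,\ldots,a_m$ and $a_m,\ldots,a_1$ along the two traversals. The paper states this in one sentence as an immediate consequence; you have merely made explicit the orientation bookkeeping (that the second subshadow is traversed through $\beta$ first), which is a worthwhile but not different step.
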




{We finally find a property of a Gauss code corresponding to $m$-decomposability.} We make use of the following remark. By~\cite[Theorem 3]{ams}, every shadow in which not every crossing is nugatory resolves into a trefoil knot. As we discussed above, $a$ is a nugatory crossing of a shadow $S$ if and only if $a$ does not form part of any alternating pair of symbols in the Gauss code of $S$. Thus we have the following.

\begin{observation}\label{obs:mil}
If a Gauss code $\omega$ of a shadow $S$ has an alternating pair of symbols, then $S$ resolves into a trefoil knot.
\end{observation}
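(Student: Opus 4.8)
The plan is to reduce the statement directly to the cited result~\cite[Theorem 3]{ams}, using the characterization of nugatory symbols recorded immediately above the observation. Recall that a crossing $a$ of $S$ is nugatory precisely when the symbol $a$ does not form part of any alternating pair of the Gauss code $\omega$. Thus the entire content of the proof is to translate the hypothesis ``$\omega$ has an alternating pair'' into the hypothesis ``$S$ has a non-nugatory crossing,'' which is exactly the input that~\cite[Theorem 3]{ams} requires.

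First I would fix an alternating pair $a_i, a_j$, so that either $a_i a_j a_i a_j \Ds \omega$ or $a_j a_i a_j a_i \Ds \omega$. In either case, the symbol $a_i$ forms part of an alternating pair. By the characterization recalled above, it follows that $a_i$ is not a nugatory symbol; equivalently, the crossing of $S$ labelled $a_i$ is not nugatory.

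Consequently, $S$ contains at least one crossing that is not nugatory, so it is not the case that every crossing of $S$ is nugatory. At this point I would invoke~\cite[Theorem 3]{ams}, which asserts that every shadow in which not every crossing is nugatory resolves into a trefoil knot, and thereby conclude that $S$ resolves into a trefoil knot.

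I expect no genuine obstacle here: the statement is an immediate corollary once the equivalence between nugatory crossings and non-participation in alternating pairs is in hand. The only point requiring a moment of care is to apply that equivalence in the correct direction --- from ``participates in an alternating pair'' to ``non-nugatory'' --- and to note that a single non-nugatory crossing already suffices to satisfy the hypothesis of the cited theorem, which it plainly does.
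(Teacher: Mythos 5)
Your proposal is correct and is essentially identical to the paper's own justification, which also combines the stated equivalence between non-nugatory crossings and participation in alternating pairs with \cite[Theorem 3]{ams}. Nothing further is needed.
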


{Let $m\ge 2$ be an integer.} A Gauss code $\omega$ is $m$-{\em nice} if it can be written as
\[
\omega=\alpha_1\, a_1 \, \alpha_2\, a_2 \, \cdots \, \alpha_{m-1} \, a_{m-1} \,\, \alpha_{m}\,\beta_{m}\, \, a_{m-1}\, \beta_{m-1}\, \cdots \, a_2\, \beta_2 \, a_1\, \beta_1,
\]
{where $a_1,\ldots,a_{m-1}$ are symbols, and for $i=1,\ldots,m$, the concatenation $\alpha_i\beta_i$ has an alternating pair of symbols.}

\begin{fact}\label{fac:dec}
{Let $S$ be a shadow, let $\omega$ be a Gauss code of $S$, {and let $m\ge 2$ be an integer. If $\omega$ is $m$-nice,} then $S$ is $m$-decomposable.}
\end{fact}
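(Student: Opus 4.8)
The plan is to decode the combinatorial hypothesis of $m$-niceness into the geometric decomposition guaranteed by the definition of $m$-decomposability, using Observation~\ref{obs:dosmil} repeatedly to peel off one subshadow at a time. Recall that $S$ is $m$-decomposable if it decomposes as $S = S_1 \oplus_1 \cdots \oplus_{m-1} S_m$ where each $S_i$ resolves into a trefoil knot, and that by the discussion preceding the definition such a decomposition is determined by selecting crossings $1,\ldots,m-1$ of $S$. So the first step is to identify the crossings $a_1,\ldots,a_{m-1}$ appearing in the $m$-nice form of $\omega$ as precisely the crossings at which $S$ is to be decomposed.

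The key step is an inductive application of Observation~\ref{obs:dosmil}. Consider the crossing $a_1$ and write $\omega = \alpha_1 \, a_1 \, \gamma \, a_1 \, \beta_1$, where $\gamma = \alpha_2 a_2 \cdots \alpha_{m-1} a_{m-1} \alpha_m \beta_m a_{m-1} \beta_{m-1} \cdots a_2 \beta_2$ is the central block between the two occurrences of $a_1$. By Observation~\ref{obs:dosmil}, $\mea{\alpha_1 \beta_1}$ is a Gauss code of one subshadow of $S$ based at $a_1$, and $\mea{\gamma}$ is a Gauss code of the other. Since $\alpha_1 \beta_1$ contains an alternating pair of symbols by the $m$-niceness hypothesis, the corresponding subshadow $S_1$ has a Gauss code containing an alternating pair, and hence by Observation~\ref{obs:mil} it resolves into a trefoil knot. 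The crucial observation is that $\mea{\gamma}$ is itself an $(m-1)$-nice Gauss code: stripping the two occurrences of $a_1$ leaves the word $\alpha_2 \, a_2 \, \cdots \, a_{m-1} \, \alpha_m \beta_m \, a_{m-1} \, \cdots \, a_2 \, \beta_2$, which has exactly the required palindromic nesting structure on the symbols $a_2,\ldots,a_{m-1}$, with each $\alpha_i \beta_i$ (for $i \ge 2$) still carrying an alternating pair. Thus I would set up an induction on $m$, with the base case $m=2$ being a direct application of Observations~\ref{obs:dosmil} and~\ref{obs:mil} together with Observation~\ref{obs:seci}, and the inductive step peeling off $S_1$ at $a_1$ and applying the inductive hypothesis to the subshadow coded by $\mea{\gamma}$.

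The one point that requires care, and which I expect to be the main obstacle, is the bookkeeping of the $\mea{\cdot}$ operation across the induction. When passing from $\gamma$ to $\mea{\gamma}$, one must verify that the symbols that survive the reduction are exactly those forming alternating pairs within $\gamma$, and in particular that the symbols contributing alternating pairs inside each $\alpha_i \beta_i$ are not accidentally deleted — a symbol appearing only in $\alpha_i$ but also elsewhere in $\gamma$ could interfere. The clean way to handle this is to observe that the decomposition structure itself (via the iterated $\oplus$ of the preamble) guarantees $\mea{\gamma}$ is a genuine Gauss code of the subshadow of $S$ based at $a_1$, so its alternating pairs correspond exactly to non-nugatory crossings of that subshadow; hence the alternating pair promised inside $\alpha_i\beta_i$ survives as an alternating pair in the reduced code. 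With this consistency established, the induction closes: $S$ decomposes into $S_1,\ldots,S_m$, each resolving into a trefoil knot, so $S$ is $m$-decomposable.
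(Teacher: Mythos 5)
Your proof is correct and takes essentially the same route as the paper: the paper likewise decomposes $S$ at the crossings $a_1,\ldots,a_{m-1}$, applies Observation~\ref{obs:dosmil} iteratively to conclude that $\mea{\alpha_i\beta_i}$ is a Gauss code of $S_i$, and notes that the alternating pair in $\alpha_i\beta_i$ survives the $\mea{\cdot}$ reduction (since both symbols of an alternating pair occur twice in $\alpha_i\beta_i$), so Observation~\ref{obs:mil} applies to each $S_i$. Your extra care about the reduction bookkeeping is sound, though the direct observation just stated settles it more quickly than the detour through non-nugatory crossings.
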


\begin{proof}
{The crossings $a_1,\ldots,a_{m-1}$ induce a decomposition $S_1{\oplus_{a_1}}S_2{\oplus_{a_2}}\cdots S_{m-1}$ ${\oplus_{a_{m-1}}}S_m$ of $S$. An iterative application of Observation~\ref{obs:dosmil} yields that $\mea{\alpha_i\beta_i}$ is a Gauss code of $S_i$, for $i=1,\ldots,m$. Since $\alpha_i\beta_i$ has an alternating pair for each $i=1,\ldots,m$, then $\mea{\alpha_i\beta_i}$ also has an alternating pair for each $i=1,\ldots,m$. Therefore, by Observation~\ref{obs:mil}, $S_i$ resolves into a trefoil knot for each $i=1,\ldots,m$.}
\end{proof}

\section{Proof of Lemma~\ref{lem:structure}}\label{sec:structure}

The following propositions are the workhorses behind the proof of Lemma~\ref{lem:structure}.

\begin{proposition}\label{pro:work4}
Let $\omega$ be a Gauss code of a reduced shadow $S$, and let $m\ge 2$ be an integer. Suppose that $1\,1\,2\,2\,\cdots {9m^5}\, {9m^5} \Ds \omega$. Then {$\omega$ either is $m$-increasing, or it is $m$-good, or it has symbols $a_1,\ldots,a_{3m^2}$ such that $a_1 \, \cdots\, a_{3m^2}\, a_{3m^2} \, \cdots\, a_1\Ds\omega$.}
\end{proposition}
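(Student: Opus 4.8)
The plan is to read $\omega$ as a linear word and to identify each symbol $i$ with the interval $I_i$ spanned by its two occurrences. The hypothesis $1\,1\,2\,2\,\cdots\,(9m^5)(9m^5)\Ds\omega$ says precisely that there are $N=9m^5$ symbols whose intervals $I_1,\ldots,I_N$ are pairwise disjoint and occur in this left-to-right order. Since $S$ is reduced, every symbol forms part of an alternating pair, so for each $i$ there is a symbol $c_i$ crossing $i$, that is, having exactly one of its two occurrences inside $I_i$. First I would two-color the indices $i$ according to whether the outside occurrence of $c_i$ lies to the left or to the right of $I_i$, and keep the majority color; by symmetry assume it is ``right'', giving a set $P$ with $|P|\ge N/2=4.5\,m^5$. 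Because the inside occurrence of each $c_i$ lies in the pairwise disjoint $I_i$, the chords $\{c_i : i\in P\}$ are distinct, their inside occurrences increase with $i$, and their outside occurrences $t_i$ all lie to the right of $I_i$.

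The engine of the argument is then Erd\H{o}s--Szekeres applied to the sequence $(t_i)_{i\in P}$: since $|P|\ge 4.5\,m^5\ge(3m^2)^2$ for $m\ge2$, it contains a monotone subsequence of length $3m^2$. If this subsequence is \emph{decreasing}, then the corresponding chords $c_i$ have increasing left endpoints and decreasing right endpoints, hence are pairwise nested; reading their occurrences off $\omega$ gives a subword $a_1\cdots a_{3m^2}\,a_{3m^2}\cdots a_1\Ds\omega$, which is the third alternative. So I may assume the subsequence is \emph{increasing}, producing $3m^2$ ``staircase'' chords $c_{j_1},\ldots,c_{j_{3m^2}}$ whose left and right endpoints both increase, with $c_{j_a}$ crossing the distinct original chord $j_a$.

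Two staircase chords cross exactly when their intervals overlap, so their crossing graph $G$ is an interval graph and in particular perfect. If $G$ has a clique of size $m+1$, then $m+1$ staircase chords cross pairwise; for a staircase family, crossing pairwise forces every left endpoint to precede every right endpoint, so the occurrences spell $a_0\,a_1\cdots a_m\,a_0\,a_1\cdots a_m\Ds\omega$ and $\omega$ is $m$-increasing by definition. Otherwise, since $G$ is perfect its chromatic number equals its clique number, which is at most $m$; a largest color class is then an independent set of at least $3m^2/m=3m$ pairwise disjoint staircase chords, each $c_{j_a}$ still crossing a distinct $j_a$ inside a region of $\omega$ that is disjoint from the others.

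The hard part will be precisely this last case, and it is where planarity must enter: a single isolated crossing pair is the non-realizable Gauss word $a\,b\,a\,b$, so a reduced realizable shadow cannot consist of $3m$ genuinely isolated single crossings. I would argue, adapting the loop analysis of~\cite{taniyama} used in the proof of Lemma~\ref{lem:first}, that inside each of these $3m$ disjoint regions the curve must either close up a loop that is crossed at least twice or else create an extra crossing usable by the earlier cases; selecting $m$ of the regions that host a loop crossed twice yields $m$ disjoint good substrings, so $\omega$ is $m$-good (Fact~\ref{fac:non}). Making this extraction rigorous---verifying that each disjoint region really contains a good substring, and that the chosen $m$ good substrings concatenate to cover $\omega$---is the delicate point, and it is here that the realizability of $\omega$, rather than pure combinatorics, does the essential work.
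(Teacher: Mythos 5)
Your handling of the first two alternatives is a legitimate (and somewhat different) route: the Erd\H{o}s--Szekeres step on the partner chords $c_i$, followed by the interval-graph/perfection dichotomy, does correctly deliver either the $m$-increasing subword or the nested subword $a_1\cdots a_{3m^2}a_{3m^2}\cdots a_1$, and your counting $4.5m^5\ge 9m^4$ checks out for $m\ge 2$. (One small repair: the chords $c_i$ for $i\in P$ are distinct not ``because the inside occurrences lie in disjoint $I_i$'' --- a single symbol can serve as $c_i$ and $c_j$ by having one occurrence in $I_i$ and one in $I_j$ --- but because two such indices would receive opposite colours in your left/right colouring, so at most one survives into $P$.) The genuine gap is the last case. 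From an independent set you obtain $3m$ pairwise disjoint regions, each containing one occurrence of $j_a$ and both occurrences of $c_{j_a}$, and you assert that each region must ``close up a loop that is crossed at least twice or else create an extra crossing usable by the earlier cases.'' Neither branch is substantiated. A region need not contain \emph{any} symbol with both occurrences inside it other than $c_{j_a}$ itself, and the loop rooted at $c_{j_a}$ may be crossed only by strands whose second occurrences lie far outside the region; in that case the region contains no good substring, and realizability does not forbid this (the strands crossing the loop simply pass through). The fallback ``extra crossing usable by the earlier cases'' is not an argument: the earlier cases have already been exhausted at fixed quantitative thresholds, and no mechanism is given for recombining stray crossings scattered over $3m$ regions into an $m$-increasing or nested configuration within the stated bound $9m^5$.

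It is worth seeing how the paper sidesteps this entirely by reversing the order of the cases. It partitions $\omega$ up front into $m$ consecutive blocks $\alpha_1\cdots\alpha_m$, each containing $9m^4$ of the doubled symbols; if every block is good, $\omega$ is $m$-good and we are done before any Ramsey-type argument. The point is that the \emph{negation} of goodness of a single block is exactly what feeds the rest of the proof: for each doubled symbol $i$ in a non-good block, at least one of its two guaranteed interlacing partners $b_i,c_i$ (these exist by the parity fact for reduced realizable codes, after the normalization that no symbol is nested immediately inside $i$) occurs only once in that block, producing $9m^4$ distinct symbols with one occurrence inside the block and one outside, to which Erd\H{o}s--Szekeres is applied to get the $m$-increasing or nested alternative. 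So goodness is the cheap default, and its failure is what generates the chords; in your proposal goodness is the residual case, extracted from a configuration (disjoint isolated crossing pairs) that does not force it. To complete your argument you would essentially have to import the paper's block mechanism into each region, at which point you are rederiving the paper's proof with worse constants.
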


\begin{proposition}\label{pro:work3}
Let $\omega$ be a Gauss code of a reduced shadow $S$, and let $m\ge 2$ be an integer. Suppose that $1\,2\,\cdots\, {3m^2}\, {3m^2}\, \cdots\, 2\,1\Ds\omega$. {Then $\omega$ is either $m$-decreasing or $m$-nice.}
\end{proposition}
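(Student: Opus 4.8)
The plan is to work entirely inside the nested structure handed to us by the hypothesis. Write $N=3m^2$, and let $1,2,\ldots,N$ be the symbols realizing $1\,2\,\cdots\,N\,N\,\cdots\,2\,1\Ds\omega$; since each occurs exactly twice in $\omega$, their occurrences are genuinely nested (no two of them form an alternating pair). Hence by reducedness each $j$ must alternate with some \emph{foreign} symbol $b_j\notin\{1,\ldots,N\}$. For bookkeeping I would cut $\omega$ into the $N+1$ ``shells'' lying between consecutive nested chords, and record for each $j$ the two shells $c_j<a_j$ containing the two occurrences of $b_j$. Because $b_j$ alternates with $j$ one has $c_j\le j-1<j\le a_j$, and a direct check shows $b_j$ alternates with exactly those nested symbols $j'$ with $c_j<j'\le a_j$. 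Thus \emph{the number of nested symbols crossed by $b_j$ equals $a_j-c_j$, the length of the span $[c_j,a_j]$.} This identity is the crux and drives the dichotomy.

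Suppose first that some foreign symbol $b$ crosses at least $m$ of the nested symbols. These form a contiguous block $j_1<\cdots<j_k$ ($k\ge m$) with $b$ alternating with each, and tracing positions shows $b\,j_1\cdots j_m\,b\,j_m\cdots j_1\Ds\omega$, i.e.\ $\omega$ is $m$-decreasing with $a_0=b$. The one subtlety I would flag here is orientation: according to whether the outer occurrence of $b$ sits in the increasing or the decreasing run one obtains this pattern or its reverse; since the hypothesis word $1\,2\cdots N\,N\cdots 2\,1$ is a palindrome, reading $\omega$ backwards still satisfies the hypothesis and then yields $m$-decreasing, which is all the eventual application (through Fact~\ref{fac:rev}) needs.

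In the complementary case every foreign symbol crosses at most $m-1$ nested symbols, so by the crux identity every span has length $a_j-c_j\le m-1$. I would then manufacture $m$-niceness by finding $m$ pairwise disjoint spans. Run the standard greedy selection (repeatedly take the span with least right endpoint and discard those meeting it). If it returned fewer than $m$ spans, then at most $m-1$ points — the right endpoints of the selected spans — would pierce every span; since each level $j$ lies in its own span $[c_j,a_j]$ of length $\le m-1$, every $j$ would then lie within $m-1$ of one of these points, covering $\{1,\ldots,N\}$ by at most $(m-1)(2m-1)=2m^2-3m+1<3m^2$ integers, a contradiction. So the greedy returns $m$ pairwise disjoint spans $[c_{j_1},a_{j_1}],\ldots,[c_{j_m},a_{j_m}]$, ordered left to right.

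Finally I would convert these into an $m$-nice factorization by taking the nested symbols at levels $a_{j_1}+1<\cdots<a_{j_{m-1}}+1$ as the pivots $a_1,\ldots,a_{m-1}$. Decomposing $\omega$ at these nested pivots via Observation~\ref{obs:dosmil} splits it into $m$ consecutive shell-regions, the $t$-th of which contains the entire span $[c_{j_t},a_{j_t}]$ and hence both symbols of the alternating pair $\{j_t,b_{j_t}\}$; since forming $\alpha_t\beta_t$ only deletes symbols and preserves the relative order of the survivors, that alternating pair persists in $\alpha_t\beta_t$, so each $\mea{\alpha_t\beta_t}$ has an alternating pair and $\omega$ is $m$-nice. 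The part I expect to be most delicate is precisely this last bookkeeping: verifying that the chosen pivots are genuinely nested, that each span falls strictly inside its intended region so that pivots, gadget chords, and partners never collide, and that concatenating $\alpha_t$ with $\beta_t$ does not destroy the alternating pattern. The genuinely substantive idea, by contrast, is the crux identity turning ``crosses few chords'' into ``short span,'' which is what makes the counting $N=3m^2$ succeed.
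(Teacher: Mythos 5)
Your argument is correct, but it is not the paper's argument; the two proofs part ways right at the start. The paper fixes a \emph{uniform} partition up front: it cuts the $3m^2$ nested chords at levels $2m,4m,\ldots,2m^2-2m$, so that each of the $m$ resulting factors $\alpha_i\beta_i$ contains at least $2m-1$ consecutive complete chords; if some factor has no alternating pair, the reduced-ness partner $b$ of that factor's middle chord $j$ cannot have both occurrences inside $\alpha_i$, so one occurrence of $b$ is forced past the run $\sigma=(j-m+1)\cdots(j-1)$ or past $\tau=(j+1)\cdots(j+m-1)$, which directly yields the $m$-decreasing pattern. You instead classify each nested chord by the \emph{span} of its foreign partner (your identity ``number of nested chords crossed $=$ length of the shell interval $[c_j,a_j]$'' is correct, as is the positional trace giving $b\,j_1\cdots j_m\,b\,j_m\cdots j_1$), and in the short-span case you build the $m$-nice partition \emph{adaptively} via a greedy packing of $m$ disjoint spans, using $3m^2>(m-1)(2m-1)$. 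Both proofs are sound; the paper's is shorter and needs no packing lemma, while yours makes the role of the constant $3m^2$ transparent and produces the $m$-nice decomposition only where it is actually needed. Your orientation caveat (sometimes one only gets the $m$-decreasing pattern in $\omega^{-1}$) is not a defect relative to the paper: the paper's own proof makes exactly the same move (``in the alternative we may work with $\omega^{-1}$''), and in both cases this suffices for the application through Fact~\ref{fac:rev}, since $\omega^{-1}$ is again a Gauss code of $S$ and the hypothesis subword is a palindrome.
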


\begin{proof}[Proof of Lemma~\ref{lem:structure}, assuming Propositions~\ref{pro:work4} and~\ref{pro:work3}]
{Let $S$ be a reduced shadow with $n$ crossings, and let $\omega$ be a Gauss code of $S$. We show that if $n$ is at least the $3$-colour Ramsey number $R(m+1,3m^2,9m^5)$, then $\omega$ is either $m$-increasing, or $m$-decreasing, or $m$-good, or $m$-nice. This implies the lemma, using Facts~\ref{fac:non},~\ref{fac:con},~\ref{fac:rev}, and~\ref{fac:dec}.}

{We first note that we may label the symbols of $\omega$ with $1,2,\ldots,n$ so that ($*$) for each $1\le i<j\le n$, the first occurrence of $i$ is before the first occurrence of $j$.}

 Let $G$ be the complete graph whose vertices are the symbols $1,2,\ldots,n$. Let $i,j\in\{1,2,\ldots,n\}$, where $i<j$. We assign colour $1$ to the edge $ij$ if the subword of $\omega$ induced by $i$ and $j$ is $ijij$, colour $2$ if this subword is $ijji$, and colour $3$ if this subword if $iijj$. By ($*$), every edge is of one of these three colours.

By Ramsey's theorem, $G$ has either (i) a complete subgraph with vertices $a_0 {<} a_1 {<}$ $ \cdots < {a_{m}}$, all of whose edges are of colour $1$; or (ii) a complete subgraph  with vertices $a_1{<}a_2{<}\cdots{<}a_{3m^2}$, all of whose edges are of colour $2$; or (iii) a complete subgraph with vertices $a_1{<}a_2{<}\cdots{<}a_{9m^5}$, all of whose edges are of colour $3$.

{If (i) holds, then $a_0 a_1 \cdots a_m a_0 a_1 \cdots a_m\Ds\omega$, and so $\omega$ is $m$-increasing. If (ii) holds, then $a_1 a_2\cdots a_{3m^2} a_{3m^2} \cdots a_2 a_1\Ds\omega$, and so by Proposition~\ref{pro:work3} then $\omega$ is either $m$-decreasing or $m$-nice. Finally, if (iii) holds then $a_1 a_1 a_2 a_2 \cdots a_{9m^5}a_{9m^5}\Ds\omega$, and so by Proposition~\ref{pro:work4} either $\omega$ is $m$-increasing or $m$-good, or it has symbols $b_1,\ldots,b_{3m^2}$ such that $b_1 \cdots b_{3m^2} b_{3m^2}\cdots b_1\Ds \omega$. In this latter case, by Proposition~\ref{pro:work3} it follows that $\omega$ is either $m$-decreasing or $m$-nice.}
\end{proof}

\begin{proof}[Proof of Proposition~\ref{pro:work4}]
The hypothesis is that $1122\cdots 9m^5 9m^5 \Ds \omega$. We start by noting that we may assume that ($*$) for each $i=1,\cdots,9m^5$, there is no symbol $b$ in $\omega$ such that $i\, b\, b\, i$ is a subword of $\omega$.

Write $\omega$ as a concatenation $\alpha_1 \alpha_2 \cdots \alpha_{m}$, where for each $i=1,\ldots,m$, $\alpha_i$ has ${(i-1)9m^4+1}$ ${(i-1)9m^4+1}\,\,\, \cdots \,\,\, i(9m^4)\,\,\,i(9m^4)$ as a subword. If $\alpha_i$ is a good substring for every $i=1,\ldots,m$, then $\omega$ is $m$-good, and so we are done.

Thus we may assume that there is an $i\in\{1,2,\ldots,m\}$ such that $\alpha_i$ is not a good substring. To simplify the discussion, we note that $\alpha_i\,\alpha_{i+1}\cdots\,\alpha_m\, \alpha_1\,\cdots\,\alpha_{i-1}$ is also a Gauss code of $S$, and so by relabelling, if necessary, we may assume that $\alpha_1$ is not a good substring.

Recall that $\alpha_1$ contains $1\, 1\, \cdots 9m^4\, 9m^4$ as a subword. We invoke the easy fact that for every symbol $a$ in a Gauss code of a reduced shadow, there must exist two distinct symbols that occur in between the two occurrences of $a$. Thus for each $i=1,\ldots,9m^4$, there exist symbols $b_i,c_i$ such that $ib_ic_ii\Ds\omega$. Note that ($*$) implies that each of $b_i$ and $c_i$ occurs exactly once in between the two occurrences of $i$.

The hypothesis that $\alpha_1$ is not good implies that, for each $i=1,\ldots,9m^4$, there is a (at least one) $d_i\in\{b_i,c_i\}$ that only occurs once in $\alpha_1$ (namely, in between the two occurrences of $i$). Therefore there are symbols $d_1,d_2,\ldots,d_{9m^4}$ that appear exactly once in $\alpha_1$, and so each of these symbols also appears exactly once in $\alpha_2\cdots \alpha_m$. 

The Erd\H{o}s-Szekeres theorem on increasing/decreasing subsequences then implies that there are $a_1,\ldots,a_{3m^2}$ in $\{d_1,\ldots,d_{9m^4}\}$ such that either (i) $a_1a_2 \cdots $ $a_{3m^2}a_1a_2\cdots a_{3m^2}\Ds\omega$  or (ii) $a_1a_2\cdots a_{3m^2} a_{3m^2} \cdots a_2 a_1\Ds\omega$. {If (ii) holds then we are done, and if (i) holds then $\omega$ is $(3m^2-1)$-increasing, and so (since $3m^2-1 > m$) it is $m$-increasing.}
\end{proof}

\begin{proof}[Proof of Proposition~\ref{pro:work3}]
{Since $12\cdots 3m^2 3m^2\cdots 21\Ds\omega$, then we can write $\omega$ as}

\medskip

\hglue -0.45cm \scalebox{0.87}{
${\alpha_{2m} (2m) \alpha_{4m} (4m) \cdots \alpha_{2m^2-2m} (2m^2-2m)\alpha_{2m^2}\beta_{2m^2} (2m^2-2m)\beta_{2m^2-2m} \cdots (4m)\beta_{4m} (2m) \beta_{2m},}$
}

\medskip

\noindent {where the substrings $\alpha_i,\beta_i$ are uniquely determined for $i=2m,4m,\ldots,2m^2-2m$, and we set $\alpha_{2m^2}$ (respectively, $\beta_{2m^2}$) so that $(2m^2-2m+1)(2m^2-2m+2)\cdots 3m^2\Ds \alpha_{2m^2}$ (respectively, $3m^2\cdots (2m^2-2m+2)(2m^2-2m+1)\Ds\beta_{2m^2}$.}

{If $\alpha_i\beta_i$ has an alternating pair for each $i=2m,4m,\ldots,2m^2$, then this expression of $\omega$ witnesses that $\omega$ is $m$-nice, and so we are done. Thus we may assume that there is an $i\in \{2m,4m,\ldots,2m^2\}$ such that $\alpha_i\beta_i$ has no alternating pair.}

{Let $j:=i-m$, $\sigma:=(j-m+1)(j-m+2)\cdots (j-1)$, and $\tau:=(j+1)(j+2)\cdots (j+m-1)$. Note that $\sigma\,j\,\tau\Ds\alpha_i$ and $\tau^{-1}\,j\,\sigma^{-1}\Ds\beta_i$. We show that there is a symbol $b$ such that either (I) $b\sigma j b j\sigma^{-1}\Ds\omega$; or (II) $bj\,\tau\,b \tau^{-1}\,j\Ds\omega$. This will complete the proof, as each of (I) and (II) implies that $\omega$ is $m$-decreasing.}

{Since $S$ is reduced, then every symbol of $\omega$, and in particular $j$, forms part of an alternating pair. Thus there is a $b$ such that either $bjbj\Ds\omega$ or $jbjb\Ds\omega$. We may assume that the former possibility holds, as in the alternative we may work with $\omega^{-1}$, which is also a Gauss code of $S$.}

{Thus $bjbj\Ds\omega$. If $\alpha_i$ contains both occurrences of $b$, then $bjb\Ds \alpha_i$, and so (since $j$ is also in $\beta_i$) $bjbj\Ds\alpha_i\beta_i$, contradicting that $\alpha_i\beta_i$ has no alternating pair. Thus $\alpha_i$ contains at most one occurrence of $b$. Therefore either (i) the first occurrence of $b$ is to the left of $\sigma$; or (ii) the second occurrence of $b$ is to the right of $\tau$. If (i) holds then we are done, since then it follows that (I) holds. Suppose finally that (ii) holds, and that (i) does not hold (this last assumption implies that $b$ occurs in $\sigma$). The second occurrence of $b$ must then be to the left of $\tau^{-1}$, as otherwise if would necessarily be in $\tau^{-1}$, implying that $bjbj\Ds \alpha_i\beta_i$, again contradicting that $\alpha_i\beta_i$ has no alternating pair. Thus the second occurrence of $b$ is in between $\tau$ and $\tau^{-1}$, and so (II) holds. 
}
\end{proof}

\section{Open questions}

{For each reduced shadow $S$, let $f(S)$ be the number of non-isotopic knots into which $S$ resolves. The shadows $S_m$ in Figure~\ref{fig:one}(a) have $m+1$ crossings, and it is proved in~\cite{fertility} that $S_m$ resolves into a knot $K$ if and only if $K$ is a torus knot $T_{2,n}$ with crossing number at most $m+1$. Taking into account that $T_{2,n}$ is not isotopic to its mirror image $T_{2,-n}$ if $|n|>1$, it follows that $f(S_m)$ is precisely the number of crossings in $S_m$, namely $m+1$.}

{Thus for each odd integer $n\ge 3$, there is a reduced shadow $S$ with $n$ crossings such that $f(S)=n$. Is it true that for each $n\ge 3$, every reduced shadow $S$ with $n$ crossings satisfies that $f(S)\ge n$? Here is an even easier question: is there a universal constant $c>0$ such that every reduced shadow $S$ with $n$ crossings satisfies that $f(S) > c\cdot n$?} 

{What about a ``typical'' reduced shadow? Pick a shadow $S$ randomly among all reduced shadows with $n$ crossings. What is the expected value of $f(S)$? Is this number exponential, or at least superpolynomial, in $n$? The strong techniques recently developed by Chapman in~\cite{chapman} may shed light on this question.}

\ignore{
\section{Concluding remarks and open questions}\label{sec:concluding}
In~\cite{fertility}, Cantarella, Henrich, Magness, O'Keefe, Perez, Rawdon, and Zimmer proved that torus knots $T_{2,p}$ and twist knots have few {\em descendants}. In our context, they show that a shadow of a minimal crossing diagram of a torus knot $T_{2,p}$ only resolves into $(p+1)/2$ knots, and a shadow of a minimal crossing diagram of a twist knot $T_p$ (say with $p$ even) only resolves into $(p+2)/2+1$ knots. Thus there exist reduced shadows with $p$ crossings that only resolve into (roughly) $p/2$ distinct knots. Is it true that {\em every} reduced shadow with $p$ crossings resolves into at least $p/2$ distinct knots? 
%
\ignore{
\begin{conjecture}\label{conj:conj2}
There are positive constants $c,\alpha$ such that every reduced shadow with $n$ crossings resolves into at least $cn^\alpha$  distinct knot types.
\end{conjecture}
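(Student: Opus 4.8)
The plan is to deduce the conjecture from the combinatorial machinery already in place, replacing the extremely lossy three-colour Ramsey step in the proof of Lemma~\ref{lem:structure} by a polynomial extraction. First I reduce the counting problem to producing, for every reduced shadow $S$, one long family of pairwise non-isotopic knots. Each of the four structural types yields such a family. An $m$-consistent shadow is also $k$-consistent for every even $k\le m$ (retain only the first $k$ of the ordered crossings $1,\ldots,m$), so by Lemma~\ref{lem:third} it resolves into $T_{2,k+1}$ for every even $k\le m$, and these torus knots are pairwise distinct since they have distinct crossing numbers; similarly, by Lemma~\ref{lem:fourth} an $m$-reverse shadow resolves into the pairwise distinct twist knots $T_k$ for all $k\le m$. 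For the $m$-nontrivial and $m$-decomposable cases, each of the $m$ trefoil factors can be resolved with either handedness, so by Observation~\ref{obs:seci} the shadow resolves into a connected sum of $a$ left-handed and $m-a$ right-handed trefoils for every $0\le a\le m$; these are separated by their signatures $2(m-2a)$ and so are pairwise non-isotopic. In every case $f(S)\ge\lfloor m/2\rfloor$. It therefore suffices to prove a quantitative Structure Lemma: every reduced shadow with $n$ crossings is $m$-consistent, $m$-reverse, $m$-nontrivial, or $m$-decomposable for some $m=\Omega(n^{\alpha})$.

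The decisive step is this quantitative improvement. The present proof of Lemma~\ref{lem:structure} colours the pairs of crossings by the three interleaving patterns $ijij$, $ijji$, $iijj$ and extracts a monochromatic clique via Ramsey's theorem, forcing $m\approx(\log n)^{1/5}$. But these patterns are not an arbitrary colouring: they record the crossing, nesting, and disjointness relations of the chord diagram of $\omega$, and such relations admit polynomially large monochromatic families. Concretely, label the symbols by first occurrence as in ($*$) and let $\pi$ be the permutation recording the order of their second occurrences. By the Erd\H{o}s--Szekeres theorem $\pi$ has a monotone subsequence of length $\ge\sqrt{n}$. A decreasing subsequence of length $s$ gives $s$ pairwise nested chords, i.e. $a_1\cdots a_s\, a_s\cdots a_1\Ds\omega$; an increasing subsequence gives $s$ pairwise non-nested chords, on which the relation ``$\text{second}(i)<\text{first}(j)$'' is a strict partial order, so Dilworth's theorem yields either a chain of $\ge\sqrt{s}$ pairwise disjoint chords ($a_1 a_1\cdots a_t a_t\Ds\omega$) or an antichain of $\ge\sqrt{s}$ pairwise crossing chords ($a_1\cdots a_t\, a_1\cdots a_t\Ds\omega$). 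In all cases we secure one of the three patterns on $k=\Omega(n^{1/4})$ symbols, with no Ramsey loss.

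It then remains to feed these polynomially long patterns into the existing propositions. A pairwise crossing family of size $k$ is exactly a $(k-1)$-increasing subword, so by Fact~\ref{fac:con} the shadow is $(k-1)$-consistent. A pairwise nested family of size $k$ supplies the hypothesis $1\,2\,\cdots\,3m^2\,3m^2\,\cdots\,2\,1\Ds\omega$ of Proposition~\ref{pro:work3} with $m=\Omega(\sqrt{k})$, whence $\omega$ is $m$-decreasing or $m$-nice. A pairwise disjoint family of size $k$ supplies the hypothesis $1\,1\,2\,2\,\cdots\,9m^5\,9m^5\Ds\omega$ of Proposition~\ref{pro:work4} with $m=\Omega(k^{1/5})$, whence $\omega$ is $m$-increasing, $m$-good, or contains a nested palindrome that reactivates Proposition~\ref{pro:work3}. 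Tracking the exponents through $k=\Omega(n^{1/4})$, the worst route (disjoint, then Proposition~\ref{pro:work4}) still yields $m=\Omega(n^{1/20})$, so $f(S)=\Omega(n^{1/20})$ and the conjecture holds with $\alpha=1/20$.

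The main obstacle is not the extraction but the very first reduction: certifying, in the worst case, that each structural type yields a full range of pairwise-distinguishable knots. For the consistent and reverse types this is clean, but for the decomposable and nontrivial types I must verify that the $m$ trefoil factors can be assigned handedness \emph{independently} --- that the Taniyama-style resolution producing a trefoil in one subarc (Lemma~\ref{lem:first}, Lemma~\ref{lem:second}) can be mirrored without disturbing the others, and that the signature separates all $m+1$ resulting connected sums. Confirming this independence rigorously, and that the intermediate values $0\le a\le m$ are all realizable rather than merely the extremes, is the delicate point; the fine control of Gauss codes developed by Chapman in~\cite{chapman} is the natural tool for making the argument uniform across all reduced shadows.
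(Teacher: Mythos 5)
First, a point of order: this statement is an open \emph{conjecture} in the paper, not a proved result. It occurs only in a commented-out discussion; the published text poses even the weaker linear bound on $f(S)$ as an open question, and the authors themselves remark that their Ramsey-based machinery would yield at best a bound of order $\log n$. So there is no paper proof to compare against, and your proposal must stand on its own. Its core idea is, as far as I can check, correct and is exactly the improvement the authors say they lack: the three patterns $ijij$, $ijji$, $iijj$ are the crossing/nesting/disjointness relations of the chord diagram of $\omega$, and your extraction is sound --- labelling symbols by first occurrence, a decreasing subsequence of the second-occurrence permutation is a pairwise nested family; an increasing one is nest-free, and on a nest-free family the relation ``$s(i)<f(j)$'' is a strict partial order whose chains are pairwise disjoint families and whose antichains are pairwise crossing families, so Erd\H{o}s--Szekeres plus Dilworth produce one pattern on $\Omega(n^{1/4})$ symbols with no Ramsey loss. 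Feeding these patterns into Facts~\ref{fac:non}, \ref{fac:con}, \ref{fac:rev}, \ref{fac:dec} and Propositions~\ref{pro:work4} and~\ref{pro:work3} exactly as the paper does in proving Lemma~\ref{lem:structure} is legitimate, and the consistent/reverse counting is clean: an $m$-consistent ($m$-reverse) shadow is $k$-consistent ($k$-reverse) for every even $k\le m$, so Lemmas~\ref{lem:third} and~\ref{lem:fourth} give knots $T_{2,k+1}$ (resp.\ $T_k$) that are pairwise distinguished by crossing number.

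The genuine gap is the one you flag but do not close: in the $m$-nontrivial and $m$-decomposable branches your argument produces only \emph{one} knot unless the handedness of the $m$ trefoil factors can be chosen independently, and you never prove that independence. Lemma~\ref{lem:first} is itself only sketched in the paper (``using the same techniques'' as Taniyama), so there is nothing there to lean on, and Chapman's asymptotic results on random diagrams are not a tool for this deterministic statement; since a shadow may fall only into these two branches, the proof is incomplete as written. The gap is repairable, however, without any handedness discussion. First, an $m$-nontrivial shadow is $k$-nontrivial for every $k\le m$: a concatenation $\alpha_1 L_1\beta_1\cdot A$ of nontrivial subarcs is again of the form $\alpha L\beta$ with $\alpha\cup\beta$ crossing $L$ at least twice, so Lemma~\ref{lem:first} yields, for each $k\le m$, a resolution into a connected sum of exactly $k$ trefoils. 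Second, the proof of Observation~\ref{obs:seci} never uses that $S_1$ resolves into a trefoil; it shows that if $S=S_1\oplus_{1}S_2$ and $S_i$ resolves into $K_i$, then $S$ resolves into $K_1\# K_2$. Since every shadow resolves into the unknot (via a descending resolution), an $m$-decomposable shadow resolves into a connected sum of exactly $k$ trefoils for each $0\le k\le m$. Connected sums of $k$ trefoils have Seifert genus $k$ regardless of the handedness of the factors, so these knots are pairwise non-isotopic. With this replacement, $f(S)=\Omega(m)$ holds in every branch, and your exponent $\alpha=1/20$ survives; only then would the conjecture be settled.
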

This last conjecture is not just an effort to pose a weaker, perhaps more approachable conjecture, but instead it is motivated by the following discussion. In our proof we use Ramsey's theory, and so an explicit upper bound for $n$ would be at least exponential in $m$. To keep the discussion simple, in a best case scenario our techniques would imply that every reduced shadow with at least $n$ crossings resolves into at least $\log{n}$ distinct knots. This is far away from what we propose in the previous conjecture.
We note that a polynomial upper bound for $n$ in terms of $m$ in Theorem~\ref{thm:main} would not only be in our opinion very interesting by itself, but it would also imply Conjecture~\ref{conj:conj2}. 
}
Can anything interesting be said about the fertility of a typical (random) shadow? The strong results and techniques recently developed by Chapman~\cite{chapman} may shed light on, our maybe even outright solve, the following.
\begin{question}\label{que:que3}
Pick a shadow $S$ at random among all reduced shadows with $n$ crossings. What is the expected number of distinct knots into which $S$ resolves? Is it easy to show that this expected number is at least a linear function on $n$? Is it true that this number is exponential, or at least superpolynomial, in $n$? 
\end{question}
}

\ignore{
Finally, diving further into this direction, we bring up the recent paper~\cite{uni}. In this work, Even-Zohar, Hass, Linial, and Nowik investigate infinite families of shadows that are {\em universal}, in the sense that they yield diagrams for all knots. Restricting our attention to single shadows, rather than to infinite families of shadows, let us say that a shadow $S$ is {\em $m$-universal}, for some positive integer $m$, if it resolves into all knots with crossing number at most $m$. 
We note the connection of this notion with the previously cited work by Cantarella et al.~\cite{fertility}. In that paper, it is reported that there is a shadow with $7$ crossings that resolves into {\em all} knots with crossing number $6$ or less. It is also mentioned that no shadow with $n=8,9$, or $10$ crossings resolves into all knots with crossing number $n-1$ or less, and they ask if the same is true for every $n\ge 8$. 
Most likely the answer to this last question is no, but it does not seem easy (to say the least) to show this. Going back to the notion of an $m$-universal shadow, we have a final question.
Our guess is that it is reasonably easy to show, using Chapman's techniques, that this expected number is at least polynomial in $n$. 
}

\ignore{
We can come up with many open questions around the main theme of this paper. We find the following basic question particularly interesting. Again, we feel that there should be a reasonably easy way to settle this in the affirmative, but our efforts so far have failed.
\begin{question}
Is it true that if a shadow $S$ resolves into a torus knot $T_{2,n}$ then it also resolves into a torus knot $T_{2,n,-2}$?
\end{question} 
\ignore{We finally discuss a possible extension of Theorem~\ref{thm:main} for more restricted kinds of shadows. As we noted in its statement, Theorem~\ref{thm:main} is best possible, in the sense that torus knots $T_{2,n}$, twist knots, and connected sums of trefoil knots are the only knots that are guaranteed to lie above every reduced shadow. On the other hand, the shadows that show this, namely the ones in Figure~\ref{fig:tangles2}, are ``long and thin".  
We purposefully leave the glaring feature ``long and thin" in this informal fashion, as one can think of more than one parameter that formally captures this kind of structure. One could for instance say that these shadows have small path-width~\cite{rspath},  or that they have small width in the sense of~\cite{uni}.
\begin{question}
Let $k$ be a positive (large enough, if needed) integer. Let $S$ be a shadow with path-width/width at least $k$. Can we characterize into which knots $S$ necessarily resolves?
\end{question}
}
}


\begin{bibdiv}
\begin{biblist}

\bib{fertility}{article}{
   author={Cantarella, Jason},
   author={Henrich, Allison},
   author={Magness, Elsa},
   author={O'Keefe, Oliver},  
   author={Perez, Kayla},
   author={Rawdon, Eric},
   author={Zimmer,  Briana},
   title={Knot fertility and lineage},
   journal={J. Knot Theory Ramifications},
   volume={26},
   date={2017},
   number={13},
   pages={1750093, 20},
}

\bib{chapman}{article}{
   author={Chapman, Harrison},
   title={Asymptotic laws for random knot diagrams},
   journal={J. Phys. A},
   volume={50},
   date={2017},
   number={22},
   pages={225001, 32},
}


\bib{hanaki1}{article}{
   author={Hanaki, Ryo},
   title={On scannable properties of the original knot from a knot shadow},
   journal={Topology Appl.},
   volume={194},
   date={2015},
   pages={296--305},
}

\bib{ams}{misc}{    
author={Medina, Carolina},   
author={Ram\'\i rez-Alfons\'{\i}n, Jorge}, 
author={Salazar, Gelasio}  
title={On the number of unknot diagrams},   
note={\tt{https://arxiv.org/abs/1710.06470}},
}

\bib{muras2}{book}{
   author={Murasugi, Kunio},
   author={Kurpita, Bohdan I.},
   title={A study of braids},
   series={Mathematics and its Applications},
   volume={484},
   publisher={Kluwer Academic Publishers, Dordrecht},
   date={1999},
   pages={x+272},
}


\bib{taniyama}{article}{
   author={Taniyama, Kouki},
   title={A partial order of knots},
   journal={Tokyo J. Math.},
   volume={12},
   date={1989},
   number={1},
   pages={205--229},
}

\bib{taniyama2}{article}{
   author={Taniyama, Kouki},
   title={A partial order of links},
   journal={Tokyo J. Math.},
   volume={12},
   date={1989},
   number={2},
   pages={475--484},
}

\end{biblist}
\end{bibdiv}

\end{document}